\theoremstyle{plain}
\newtheorem{thm}{Theorem}[section]
\newtheorem{lem}[thm]{Lemma}
\newtheorem{prop}[thm]{Proposition}
\newtheorem{cor}[thm]{Corollary}
\theoremstyle{definition}
\newtheorem{defn}[thm]{Definition}
\newtheorem{exmp}[thm]{Example}
\newtheorem{ques}{Question}
\theoremstyle{remark}
\newtheoremstyle{TheoremNum}
        {\topsep}{\topsep}              
        {\itshape}                      
        {}                              
        {\bfseries}                     
        {.}                             
        { }                             
        {\thmname{#1}\thmnote{ \bfseries #3}}
    \theoremstyle{TheoremNum}
\DeclareMathOperator{\ab}{ab}
\DeclareMathOperator{\core}{core}
\DeclareMathOperator{\comp}{\textbf{K}}
\title{Transversals as generating sets in finitely generated groups}
\author{Jack Button, Maurice Chiodo, Mariano Zeron-Medina Laris}
\date{\today}
\begin{document}

\let\thefootnote\relax\footnotetext{2010 \textit{AMS Classification:} 20E99, 20F05}
\let\thefootnote\relax\footnotetext{\textit{Keywords:} Transversals, generating sets, finite index subgroups, primitive elements.}
\let\thefootnote\relax\footnotetext{The second author was partially funded by the Italian FIRB ``Futuro in Ricerca'' project RBFR10DGUA\_002 and the Swiss National Science Foundation grant FN PP00P2-144681/1.}
\let\thefootnote\relax\footnotetext{The third author did part of this work while writing his thesis at the University of Cambridge, supported by the Mexican National Council for Science and Technology.}

\begin{abstract}
We explore transversals of finite index subgroups of finitely generated groups. We show that when $H$ is a subgroup of a rank $n$ group $G$ and $H$ has index at least $n$ in $G$ then we can construct a left transversal for $H$ which contains a generating set of size $n$ for $G$, and that the construction is algorithmic when $G$ is finitely presented. We also show that, in the case where $G$ has rank $n \leq3$, there is a simultaneous left-right transversal for $H$ which contains a generating set of size $n$ for $G$. We finish by showing that if $H$ is a subgroup of a rank $n$ group $G$ with index less than $3 \cdot 2^{n-1}$, and $H$ contains no primitive elements of $G$, then $H$ is normal in $G$ and $G/H \cong C_{2}^{n}$.
\end{abstract}

\maketitle

\section{Introduction}

Let $H$ be a subgroup of $G$ (written $H<G$). A \textit{left transversal} for $H$ in $G$ is a choice of exactly one representative from each left coset of $H$. A \textit{right transversal} for $H$ is $G$ in defined in an analogous fashion. A \emph{left-right} transversal for $H$ is a set $S$ which is simultaneously a left transversal, and a right transversal, for $H$ in $G$. The existence of
a left or right transversal is clear (assuming the Axiom of
Choice) whereas it is not immediate that a left-right transversal always exists.
We gave a short proof of this in \cite{BCZ} for the case where $H$ is of finite index, as well as a brief historical discussion of this result.

Transversals are natural objects of study, especially when $H$ has finite
index in $G$. Moreover, finding generating sets for a group $G$ is a well
known problem in the case when $G$ is finitely generated. Therefore we can
ask: given a finitely generated group $G$ and a finite index subgroup $H$
of $G$, is there a (say) left
transversal for $H$ in $G$ which also generates $G$? 
In fact if $T$ is a left transversal for $H$ in $G$, then it is clear that
$T^{-1}$ (the set of inverses of all elements in $T$) is a right transversal 
of $H$ in $G$. Moreover $\langle S \rangle=\langle S^{-1} \rangle$ for any 
subset $S$ of a group $G$, so we need only consider left transversals
throughout. Jain asked Cameron this question under the added assumption that $G$ is a finite group and $H$ is corefree in $G$, meaning
that $\mbox{core}_G(H)=\{e\}$ ($\core_{G}(H)$ is the intersection of all conjugates of $H$ in $G$: $\core_{G}(H):= \bigcap_{g \in G} g^{-1}Hg$). Cameron showed in \cite{Camtrans} that in this
case a generating left transversal always exists 
(see also \cite[Problem 100]{Cameron}). The proof is short but relies on a
result \cite{Whi} of Whiston on \emph{minimal} generating sets (ones where
no proper subset generates) of the symmetric
group which uses the 
classification of finite simple groups (CFSG).

However there is an obvious necessary condition for a subgroup $H$ of $G$
to possess a generating left transversal,
which is that the
index $[G:H]$ must be at least $d(G)$ (the \emph{rank} of $G$), defined to be the minimal
number of generators for the finitely generated group $G$. In Theorem \ref{main} we show that
this condition is also sufficient, for $G$ any
finitely generated group and $H$ any subgroup of finite index. We can then
try to strengthen this result by examining whether $[G:H]\geq d(G)$ implies
that there exists a left-right transversal for $H$ that generates $G$. We
have not managed to establish this in general but we have shown in Theorem
\ref{rank3} that it is true if $d(G)\leq 3$.

Our main method of proof in Section \ref{shiftbox section} is a new technique which we call \emph{shifting boxes}. It involves using the transitive action of a group $G$ on the set of left (or right) cosets of a subgroup $H<G$ to systematically apply Nielsen transformations to a generating set of $G$, such that the resulting generators lie inside (or outside) particular desired cosets of $H$. We have found this technique to be very intuitive for developing proofs. Though similar to Schreier graphs, the technique of shifting boxes is much more useful in finding solutions to the problems we have considered. Many of our results can be reduced to the case of subgroups of free groups (Proposition \ref{reduce to free}). However, even in this restricted case, the use of Stallings graphs (Schreier graphs for free groups) leads to proofs which are longer and more complicated than those by shifting boxes.

An element of a rank $n$ group $ G$ is \emph{primitive} if it lies in some generating set of size $n$ for $G$. The location of primitive elements relative to cosets of subgroups is already an area of interest. Parzanchevski and Puder \cite{Puder} show that if $w\in F_{n}$ is a non-primitive element then there is a finite index subgroup $H<F_{n}$ such that the coset $wH$ does not contain any primitive elements. Taking $w=e$ gives a finite index subgroup containing no primitive elements.

By applying the technique of shifting boxes developed in Section \ref{shiftbox section}, we are able to show in Theorem \ref{classify} that if $G$ is a rank $n$ group, then the only subgroup of $G$ with index less than $3 \cdot 2^{n-1}$ that can contain \emph{no} primitive elements is $[G,G]G^{2}$, and even then this only occurs when $G/([G,G]G^{2}) \cong C_{2}^{n}$. This gives an exponential lower bound on the index of subgroups which contain no primitive elements. Beyond the special case $[G,G]G^{2}$, the lower bound of $3 \cdot 2^{n-1}$ can be sharp: the free group $F_{2}$ has a subgroup of index $3 \cdot 2^{2-1}$ containing no primitive elements. Moreover, $F_{n}$ always has a subgroup of index $2^{n+1}$ containing no primitive elements (Example \ref{2x2n eg}).

We first announced many of the results of this paper in \cite{BCZ2}.
\\ 
\\ \textbf{Acknowledgements:} We wish to thank Rishi Vyas and Andrew Glass for their many useful conversations and comments about this work. Thanks also go to Zachiri McKenzie and Philipp Kleppmann for discussions on the Axiom of Choice.

\section{Coset intersection graphs}\label{coset section}

A useful tool for studying the way left and right cosets interact, and 
obtaining transversals, is the coset intersection graph. In this section we re-state important results from our earlier work \cite{BCZ} on this concept. We denote the complete bipartite graph on $(m,n)$ vertices by $\comp_{m,n}$.

\begin{defn}
Let $H, K < G$. We define the \emph{coset intersection graph} $\Gamma^{G}_{H,K}$ to be a graph with vertex set consisting of all left cosets of $H$ $(\{l_{i}H\}_{i \in I})$ together with all right cosets of $K$ $(\{Kr_{j}\}_{j \in J})$, where $I$, $J$ are index sets. If a left coset of $H$ and right coset of $K$ correspond, they are still included twice. Edges (undirected) are included whenever any two of these cosets intersect, and an edge between $aH$ and $Kb$ (written $aH-Kb$) corresponds to the non-empty set $aH \cap Kb$.
\end{defn}

\begin{thm}\label{graph2}
Let $H,K <G$. Then the graph $\Gamma^{G}_{H,K}$ is a disjoint union of complete bipartite graphs. Moreover, suppose that $[G:H]=n$,  $[G:K]=m$. Then each connected component of $\Gamma^{G}_{H,K}$ is of the form \emph{$\comp_{s_{i}, t_{i}}$} with $s_{i}/t_{i} = n/m$.
\end{thm}

\begin{cor}\label{useful2}
Let $H,K<G$. Suppose that $[G:H]=n$ and $[G:K]=m$, where $m \geq n$. Then there exists a set $T \subseteq G$ which is a left transversal for $H$ in $G$, and which can be extended to a right transversal for $K$ in $G$. If $H=K$ in $G$, then $T$ becomes a left-right transversal for $H$.
\end{cor}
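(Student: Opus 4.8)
The plan is to build $T$ component-by-component in the coset intersection graph $\Gamma^{G}_{H,K}$. By Theorem \ref{graph2} each connected component is a finite complete bipartite graph $\comp_{s_{i},t_{i}}$ with $s_{i}/t_{i}=n/m$; writing $s_{i}$ for the number of left-$H$-coset vertices and $t_{i}$ for the number of right-$K$-coset vertices of that component, the hypothesis $m\geq n$ gives $s_{i}\leq t_{i}$. The key point, supplied by Theorem \ref{complete bipartite}, is that the component is \emph{complete} bipartite, so every left coset of $H$ appearing in it meets every right coset of $K$ appearing in it.

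First I would, inside each component, list its left cosets $a_{1}H,\dots,a_{s_{i}}H$ and its right cosets $Kb_{1},\dots,Kb_{t_{i}}$, and, using $s_{i}\leq t_{i}$, pick for each $j\leq s_{i}$ an element $t_{i,j}\in a_{j}H\cap Kb_{j}$ (nonempty precisely because the component is complete bipartite); this is a trivial instance of choosing a system of distinct representatives. Let $T$ be the union of all the $t_{i,j}$ over all components. Then I would check that $T$ is a left transversal for $H$: each left coset of $H$ is a vertex of exactly one component and contributes exactly one element of $T$. Next I would check that the elements of $T$ lie in pairwise distinct right cosets of $K$: within a component this is immediate since the $Kb_{j}$ are distinct, and between components it is automatic because distinct components have disjoint right-coset vertex sets. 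Hence $T$ extends to a right transversal for $K$ by adjoining one representative from each of the remaining $m-n$ right cosets of $K$.

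For the case $H=K$ I would observe that then $m=n$, forcing $s_{i}=t_{i}$ in every component, so the local choice becomes a perfect matching and every right coset of $H$ is met exactly once; thus $T$ is at once a left and a right transversal for $H$, that is, a left-right transversal. I do not expect a genuine obstacle here: essentially all the substance has been pushed into Theorems \ref{complete bipartite} and \ref{graph2}, and the only thing to be careful about is the bookkeeping — keeping the ``one element per left coset'' and ``distinct right cosets'' properties under control both within and across components, and making sure that $s_{i}/t_{i}=n/m$ with $m\geq n$ is read off as $s_{i}\leq t_{i}$ rather than the reverse.
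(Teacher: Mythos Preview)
Your argument is correct and matches the paper's approach exactly: the paper sketches the proof (in the paragraph following the corollary) as ``choosing one element from each tile on the leading diagonals of the $C_i$'s'', which is precisely your selection of an element from $a_jH\cap Kb_j$ for $j\leq s_i$ in each component. The only blemish is notational: you use $t_{i,j}$ for the chosen elements while $t_i$ already denotes the number of right-coset vertices in the $i$th component, so pick a different letter.
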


Under the hypothesis of Theorem \ref{graph2}, we see that sets of $s_{i}$ left cosets of $H$ completely intersect sets of $t_{i}$ right cosets of $K$, with $s_{i}/t_{i}$ constant over $i$. With this in mind, another way of visualising $\Gamma^{G}_{H,K}$ is by the following simultaneous double-partitioning $G$: draw left cosets of $H$ as columns, and right cosets of $K$ as rows, partitioning $G$ into irregular `chessboards' denoted $C_{i}$, each with edge ratio $n:m$. Each chessboard $C_{i}$ corresponds to the connected component $\comp_{s_{i}, t_{i}}$ of $\Gamma^{G}_{H,K}$, and individual tiles in $C_{i}$ correspond to the non-empty intersection of a left coset of $H$ and a right coset of $K$ (i.e., edges in $\comp_{s_{i}, t_{i}}$). Corollary \ref{useful2} would then follow by choosing one element from each tile on the leading diagonals of the $C_{i}$'s. An example of chessboards is given in \cite{BCZ}.

The chessboard pictorial representation of partitioning $G$ into left and right cosets is extremely useful in the analysis of transversals as generating sets carried out in the next section. Note that the union of all the elements of $G$ in a single chessboard gives a unique double coset $KgH$ in $G$, and that a single chessboard is simply a double-partitioning of a double coset $KgH$ into its respective left cosets of $H$ and right cosets of $K$.

\section{Transversals as generating sets}\label{shiftbox section}

We have developed a technique which we call \emph{shifting boxes} that, for the sake of brevity, we will describe here as a systematic way to apply Nielsen transformations to a generating set of a group $G$, such that the resulting generators lie inside (or outside) particular desired cosets of a subgroup $H<G$. We can't `shift' generators in/out of any coset we like, but we do have a substantial degree of control. For ease of notation, we will often refer to the coset $eH$ as the \emph{identity coset}. We begin with the following definitions.

\begin{defn}
Let $G$ be a group, and $S:=(g_{1}, \ldots, g_{n})$  
a \emph{generating $n$-tuple} of $G$ (where $n \in \mathbb{N}$), that is, an element of the direct product
$G^n$ such that $\{g_1,\ldots, g_n\}$ generates $G$. 
A \emph{standard Nielsen move} on $S$ is the replacement of some entry
$g_{i}$ of $S$ with one of $g_jg_i,g_j^{-1}g_i,g_ig_j$ or $g_ig_j^{-1}$, where
we must have $i\neq j$. A \emph{Nielsen move} is defined to be either
a standard Nielsen move or an \emph{extended Nielsen move}, where the
latter consists of either replacing an entry $g_i$ by its inverse, or
transposing two entries $g_i$ and $g_j$ for $i\neq j$. Note that on
applying any Nielsen move to $S$, the resulting $n$-tuple still generates.
\end{defn}

\begin{defn}
Let $G$ be a group. Two generating $n$-tuples
$S_{1}, S_{2}$ of $G$ are said to be \emph{Nielsen equivalent} 
if they differ by a finite number of Nielsen moves.
\end{defn}

\begin{defn}
Let $G$ be a group, $H$ a subgroup of $G$, and $S$ a generating
$n$-tuple of $G$. We say a left coset $gH$ is \emph{full} (with respect to $S$)
if some entry of $S$ lies in $gH$, otherwise we say $gH$ is \emph{empty} 
(with respect to $S$). To save on notation, we will usually suppress the 
term `with respect to $S$' when there is no ambiguity.
\end{defn}

In the introduction we mentioned that the Axiom of Choice was necessary to show that for every group $G$ and every subgroup $H<G$ there exists a left transversal for $H$ in $G$ (in fact, the Axiom of Choice is equivalent to this condition; see \cite[Theorem 2.1]{axiom}). However, choice is \emph{not} necessary for the groups we will be considering, as they are all finitely generated. To see this, take a finite generating set $\{x_{1}, \ldots, x_{n}\}$ for $G$ and form the canonical enumeration of words $w_{1}, w_{2}, \ldots$ on $X\cup X^{-1}$, ordered lexicographically. Then the set $T:=\{w_{n} \in G\ | \ (\forall i < n)(w_{i} \notin w_{n}H) \}$ is a left transversal for $H$ in $G$ (and we have not used choice here). Moreover, if we have a set $S \subset G$ for which no two elements of $S$ lie in the same left coset of $H$, then this set extends to a left transversal for $H$ by adjoining $T_{S}:=\{w_{n} \in G\ | \ (w_{n} \notin S) \wedge(\forall i < n)(w_{i} \notin w_{n}H) \}$ (again, without the need for choice). 

We now give several techniques, which we rely on heavily for our 
main results. Note that in this section we prove our results under very general conditions, and all techniques are (for now) existential. 

\begin{defn}
Let $H<G$ be groups. A $n$-tuple $S'$ with entries in $G$ is said to be \emph{left-cleaned} if all of its entries lie in distinct left cosets of $H$ (apart from $eH$ which may contain many entries of $S'$). An analogous definition applies for a \emph{right-cleaned} $n$-tuple, dealing with right cosets instead.
\end{defn}

\begin{lem}\label{clean}
Let $G$ be a group, $H$ a subgroup of $G$, and 
$S$ a generating $n$-tuple of $G$. Then there is a left-cleaned generating $n$-tuple $S'$ of $G$, Nielsen equivalent to $S$. An identical result holds for right cosets of $H$.
\end{lem}

\begin{proof}
We call the following process \emph{left-cleaning} an $n$-tuple.
Let $S=(g_{1}, \ldots, g_{n})$. We can assume that there is
$g_{i}, g_{j}$ with $i\neq j$
both lying in the same non-identity
left coset of $H$, so that $g_{i}H=g_{j}H\neq eH$. 
Then $g_{j}^{-1}g_{i} \in H$ so we can apply the standard Nielsen move on $S$
which replaces $g_{i}$ with $g_{j}^{-1}g_{i}$ to obtain $S_{1}$. 
Then $S_{1}$ has fewer entries lying in this left coset of $H$, and the
same number in all other non-identity left cosets. 
Iterating this procedure and then moving to other non-identity left cosets, 
we eventually reach a left-cleaned $n$-tuple $S'$.
\end{proof}

An analogous definition, result, and proof, applies for right cosets and 
right cleaning.

\begin{lem}\label{extract1}
Let $G$ be a group, $H$ a subgroup of $G$, and $S$ a generating 
$n$-tuple of $G$. If there exists at least one empty left coset of $H$, then 
there are entries $s_{j}, s_{k}$ of $S$ (possibly the same entry) and  $\epsilon \in \{\pm 1\}$ such that
$s_{j}^{\epsilon}s_{k}H$ is an empty left coset. That is, there is some full left coset of $H$ which is taken to some empty left coset of $H$ by left multiplication under some entry of $S$ or its
inverse.
\end{lem}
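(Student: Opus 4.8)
The plan is to argue by contradiction, using the action of $G$ on the set of left cosets of $H$ by left multiplication. Suppose that for \emph{every} pair of entries $s_{j}, s_{k}$ of $S$ and every $\epsilon \in \{\pm 1\}$ the left coset $s_{j}^{\epsilon}s_{k}H$ is full. I will show that this forces every left coset of $H$ to be full, contradicting the hypothesis that an empty one exists. (Note $S$ has at least one entry, since otherwise $G$ is trivial and the statement is vacuous or degenerate.)

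First I would record the elementary fact that the set $\mathcal{F}$ of full left cosets of $H$ is precisely $\{s_{1}H, \ldots, s_{n}H\}$: a coset is full exactly when it contains some entry $s_{k}$ of $S$, and in that case it equals $s_{k}H$. Next I would observe that the identity coset lies in $\mathcal{F}$: applying the standing assumption with $j=k=1$ and $\epsilon=-1$ shows that $s_{1}^{-1}s_{1}H = eH$ is full. Then I would check that $\mathcal{F}$ is invariant under left multiplication by each $s_{j}^{\pm 1}$: every element of $\mathcal{F}$ has the form $s_{k}H$ by the first step, and $s_{j}^{\epsilon}s_{k}H$ is full, hence again in $\mathcal{F}$, by the standing assumption, for all $j$ and all $\epsilon \in \{\pm 1\}$.

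Finally, since $\{s_{1}, \ldots, s_{n}\}$ generates $G$, any $g \in G$ can be written as a word $s_{j_{1}}^{\epsilon_{1}} \cdots s_{j_{m}}^{\epsilon_{m}}$; starting from $eH \in \mathcal{F}$ and applying the invariance one letter at a time (reading the word from the right) yields $gH \in \mathcal{F}$. Hence $\mathcal{F}$ is all of $G/H$, contradicting the existence of an empty left coset, and the lemma follows. As for difficulty, there is no serious obstacle here: the argument is a short orbit/pigeonhole computation. The only points needing a little care are (i) identifying the full cosets with the finite set $\{s_{k}H\}$, so that "closed under the generators" can even be stated, and (ii) getting $eH$ into $\mathcal{F}$ for free from the standing assumption, which is exactly what allows the generating property of $S$ to propagate fullness throughout $G/H$. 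Note also that finite index of $H$ plays no role; the argument works for arbitrary $H < G$.
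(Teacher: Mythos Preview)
Your proof is correct and follows essentially the same approach as the paper: argue by contradiction, observe that the set of full cosets is invariant under left multiplication by the generators $s_j^{\pm 1}$, and then use that the action of $G$ on $G/H$ is transitive to force every coset to be full. The paper's version is slightly more compressed---it simply notes that a non-empty proper $G$-invariant subset of $G/H$ contradicts transitivity, without separately arranging $eH\in\mathcal{F}$---but your explicit verification that $eH$ is full (via $s_1^{-1}s_1H$) and your step-by-step propagation along a word in the generators amount to the same argument spelled out in more detail.
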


\begin{proof}
Recall that $G$ acts transitively on the set of left cosets by left 
multiplication. 
Assume that no entry of $S$ or its inverse
sends a full left coset to an empty left coset. 
Then, as the entries of $S$ generate $G$, the  
collection of full left cosets is invariant under this action.
Seeing as there exists at least one empty left coset, 
this contradicts the transitive action of $G$.
\end{proof}

\begin{lem}\label{extract2}
Let $G$ be a group, $H$ a subgroup of $G$, and $S$ a generating $n$-tuple of
$G$. Suppose that at least one entry of $S$ lies in $H$, and moreover that 
there exists an empty left coset of $H$ with respect to $S$. Then there is a 
finite sequence of Nielsen moves on some entry $s$ of $S$ which is
contained in $H$ such that $s$ is  
taken into an empty left coset of $H$.
\end{lem}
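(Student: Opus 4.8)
The plan is to derive the statement directly from Lemma \ref{extract1} together with a short bookkeeping argument in which we move the chosen entry by at most two Nielsen moves. By hypothesis there is an entry $s$ of $S$ with $s \in H$, i.e. $s$ lies in the identity coset $eH$; I would fix this entry and keep track of its position in the tuple, since every Nielsen move applied will act on that position. Since an empty left coset of $H$ exists, Lemma \ref{extract1} supplies entries $s_j, s_k$ of $S$ and $\epsilon \in \{\pm 1\}$ such that $s_j^{\epsilon}s_k H$ is empty; note also that $s_k H$ is full, as it contains the entry $s_k$.

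First I would record two elementary facts. Because $s_k H$ is full and $s_j^{\epsilon}s_k H$ is empty with respect to $S$, these two cosets are distinct. Also $s_j$ cannot be the entry $s$: if it were, then $s_j^{\epsilon}s_k H = s^{\epsilon}s_k H = s_k H$ (as $s \in H$), contradicting that $s_j^{\epsilon}s_k H$ is empty while $s_k H$ is full. In particular $s_j$ and $s_k$ cannot both be the entry $s$.

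The construction then splits into two cases. If $s_k$ is the entry $s$ (so $s_k \in H$, whence $s_j^{\epsilon}s_k H = s_j^{\epsilon}H$), a single Nielsen move replacing $s$ by $s_j^{\epsilon}s$ — legitimate since $s_j$ is a different entry — carries $s$ into the empty coset $s_j^{\epsilon}H$, and we are done. Otherwise $s_k$ is a different entry from $s$, and I would first apply the Nielsen move replacing $s$ by $s_k s$; this is legal, and since $s \in H$ the new entry lies in $s_k H$. The point to verify here is that $s_j^{\epsilon}s_k H$ is still empty after this move: the only entry that has changed now lies in $s_k H$, which we observed differs from $s_j^{\epsilon}s_k H$, and no other entry lay in $s_j^{\epsilon}s_k H$ to begin with. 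Then apply the Nielsen move replacing the current entry (still at the same position) by $s_j^{\epsilon}$ times it, again legal since $s_j$ is a different entry; this carries the entry into $s_j^{\epsilon}s_k H$, which was empty. In every case the entry originally lying in $H$ has been moved, by Nielsen moves acting only on its own position, into an empty left coset of $H$.

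I do not anticipate a genuine conceptual obstacle here; the care needed is in the bookkeeping — confirming at each step that the two positions involved in the Nielsen move are genuinely distinct, and confirming that the target coset $s_j^{\epsilon}s_k H$ stays empty while the chosen entry passes through the intermediate coset $s_k H$, which rests on the observation that $s_k H \neq s_j^{\epsilon}s_k H$.
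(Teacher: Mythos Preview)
Your proof is correct and follows the same underlying strategy as the paper: invoke Lemma~\ref{extract1} to locate an empty coset of the form $s_j^{\epsilon}s_kH$, then move an entry of $S$ lying in $H$ into that coset by at most two Nielsen moves. The organisation differs slightly: the paper splits into four cases according to which of $s_j,s_k$ lie in $H$ (and then moves whichever of $s_j,s_k$ happens to be in $H$, or a separate $s_i\in H$ if neither is), whereas you fix the entry $s\in H$ at the outset and split into only two cases according to whether $s_k$ coincides with $s$. Your decomposition is a little more economical and has the virtue that the same entry $s$ is moved in every case, which makes the ``Nielsen moves on some entry $s$'' phrasing of the lemma statement immediate; the paper's version requires the reader to check that the entry being moved is in $H$ in each case separately. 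Both arguments rest on the same two observations you isolate: that $s_j$ cannot be the entry $s$, and that $s_kH\neq s_j^{\epsilon}s_kH$, so the target coset stays empty during the intermediate step.
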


\begin{proof}
By Lemma \ref{extract1} there are (possibly identical) entries $s_{1}, s_{2}$ of $S$, and $\epsilon \in \{\pm 1\}$, with $s_{j}^{\epsilon}s_{k}H$ an empty left coset of $H$ with respect to $S$. 
We consider all possible cases:

$1$. The case $s_{j}, s_{k} \in H$ never occurs, as then $s_{j}^{\epsilon}s_{k}H = H$ which is a full 
left coset by hypothesis.

$2$. In the case $s_{j} \notin H, s_{k} \in H$, the subcase $s_{j}^{+1}s_{k}H$ can't occur, as then $s_{j}^{+1}s_{k}H= s_{k}^{+1}H$ which is clearly full. In the subcase $s_{j}^{-1}s_{k}H$, we 
replace $s_{k}$ with $s_{j}^{-1}s_{k}$ lying in the left coset $s_{j}^{-1}s_{k}H$ which is empty.

$3$. In the case $s_{j} \in H, s_{k} \notin H$, we replace $s_{j}$ with $s_{j}^{\epsilon}s_{k}$, as the left coset $s_{j}^{\epsilon}s_{k}H$ is empty.

$4$. In the case $s_{j}, s_{k} \notin H$, take some $s_{i} \in H$ and replace $s_{i}$ with
$s_{j}^{\epsilon}s_{k}s_{i}$, which lies in the empty left coset 
$s_{j}^{\epsilon}s_{k}s_{i}H=s_{j}^{\epsilon}s_{k}H$.
As $s_i$ is a different entry
from $s_j$ and $s_k$, this is a composition of two standard Nielsen moves
on the entry $s_i$ (even if $s_{j}=s_{k}$).
\\We call this replacement process a \emph{left-extraction} of an entry of $S$ 
from $H$.
\end{proof}

Using these techniques, we state the condition 
below for a finite index subgroup of a group to possess a left transversal which 
generates the whole group. 
For simplicity, when $S$ is an $n$-tuple, we write $\tilde{S}$ for the set of entries of $S$.

\begin{thm}\label{main}
Let $G$ be a finitely generated group, and $H$ a subgroup of finite index in $G$. Then the following are equivalent:
\\ $1$. $[G:H] \geq d(G)$.
\\ $2$. There exists a left transversal $T$ for $H$ in $G$ which contains a generating set $X$ for $G$ of size $|X|= d(G)$.
\end{thm}

As mentioned in the introduction, the above result also carries over to right transversals.

\begin{proof}
That $2\Rightarrow 1$ is immediate. We show $1\Rightarrow 2$:
\\Let $n=d(G)$, and let $S$ be a generating $n$-tuple for $G$. Use Lemma \ref{clean} to produce an $n$-tuple
$S''$ Nielsen-equivalent to $S$ which is left-cleaned. Now repeatedly apply 
Lemma \ref{extract2} to begin left-extracting elements from inside $H$ 
(thus Nielsen-transforming $S''$). As $n \leq [G:H]$, we can keep 
left-extracting until we reach $S'$ which is Nielsen-equivalent to our 
original $S$, and for which no two entries of $S'$ lie in the same left coset 
of $H$. Now simply choose one element from each left coset of $H$ which is empty with 
respect to $S'$, and add these to $\tilde{S}'$ to form the set $T$. Then $T$ is clearly a left transversal for $H$, and contains $\tilde{S}'$.
\end{proof}

A slight variant of the above proof also shows the following result: when $[G:H] \leq d(G)$ then there is a generating set for $G$ of size $d(G)$ which contains a set of left coset representatives for $H$.

For ease of writing, we will often refer to the overall process of cleaning 
and/or extracting elements (either left, or right) as \emph{shifting boxes}, 
and will usually just write \emph{this follows by shifting boxes} to mean 
that it follows by the process of cleaning and/or extracting elements. Our remarks in this section give sufficient conditions for cleaning and/or extracting to be algorithmic.

The most natural question to ask now is `When does a finite index subgroup 
have a left-right transversal which generates the whole group?' This requires a deeper understanding of how cosets intersect, as discussed in Section \ref{coset section}. We urge the reader to consider the discussion of `chessboards' given after Corollary \ref{useful2}, and to consult \cite{BCZ} for an example. These are vital in proving what follows.

Let $G$ be a group, $H$ a subgroup of $G$, 
and $S$ a generating $n$-tuple of $G$. Then by Lemma $\ref{clean}$
we can first perform a
left-cleaning of $S$ to form $S'$,
followed by a right-cleaning of $S'$ (which will stay left-cleaned)
to obtain a further generating $n$-tuple $S''$ such that:
\\$1$. $S$ and $S''$ are Nielsen-equivalent.
\\$2$. $S''$ is left-cleaned.
\\$3$. $S''$ is right-cleaned.
\\We say that $S''$ is \emph{left-right-cleaned}.

\begin{lem}\label{diagonals}
Let $G$ be a group, $H$ a subgroup of finite index in $G$, 
and $S$ a generating $n$-tuple of $G$. Then $S$ is left-right-cleaned if and 
only if one can draw chessboards for $H$ in $G$ with distinct entries of $S$ 
lying in distinct diagonal tiles of chessboards, except for the chessboard corresponding to the double coset
$HeH=H$ which may contain several elements of $S$.
\end{lem}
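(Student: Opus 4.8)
I would begin by recalling the chessboard picture from Section~\ref{coset section} applied with $K=H$. By Theorem~\ref{graph2} (with $n=m$) the graph $\Gamma^{G}_{H,H}$ is a disjoint union of finite complete bipartite graphs $\comp_{s_{i},s_{i}}$, one per double coset $HgH$; drawing the corresponding chessboard $C_{i}$ amounts to choosing an ordering of the $s_{i}$ left cosets of $H$ inside $HgH$ as columns and an ordering of the $s_{i}$ right cosets of $H$ inside $HgH$ as rows. Two facts are key: every tile of $C_{i}$ is non-empty (the component is complete bipartite), and each left coset of $H$ is a column of exactly one chessboard, each right coset a row of exactly one chessboard, since the double cosets partition $G$.

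For the implication ``$S$ left-right-cleaned $\Rightarrow$ a good drawing exists'', I would fix a double coset $HgH\neq H$ with chessboard $C$ having $s$ columns and $s$ rows, and let $g_{i_{1}},\dots ,g_{i_{k}}$ be the entries of $S$ lying in $HgH$ (so $0\le k\le s$, and these are distinct as elements since, being outside $eH$, a repetition would violate left-cleanedness). As $S$ is left-cleaned they occupy $k$ distinct left cosets $L_{1},\dots ,L_{k}$; as $S$ is right-cleaned they occupy $k$ distinct right cosets $R_{1},\dots ,R_{k}$; and $g_{i_{j}}\in L_{j}\cap R_{j}$. I would then draw $C$ by placing $L_{j}$ in column $j$ and $R_{j}$ in row $j$ for $j=1,\dots ,k$, and filling columns and rows $k+1,\dots ,s$ with the remaining left and right cosets by any bijection. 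Then $g_{i_{j}}$ sits on the $j$-th diagonal tile, so the entries of $S$ in $HgH$ lie on distinct diagonal tiles of $C$; drawing the chessboard of $H$ arbitrarily completes the construction.

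For the converse, suppose chessboards are drawn so that distinct entries of $S$ occupy distinct diagonal tiles, except possibly in the chessboard of $H$. A diagonal tile lies in a single column and a single row of its chessboard, hence in a single left coset and a single right coset of $H$, and distinct diagonal tiles of one chessboard lie in distinct columns and distinct rows. Hence any two distinct entries of $S$ lying in a common double coset $HgH\neq H$ lie in distinct left cosets and in distinct right cosets of $H$. Two entries in different double cosets trivially lie in different left cosets (and different right cosets), and an entry in $H$ and one outside $H$ cannot share a left or a right coset either. So the only way two distinct entries of $S$ can share a left coset is if it is $eH$, and similarly for right cosets and $He$; that is, $S$ is left-cleaned and right-cleaned, i.e.\ \emph{left-right-cleaned}.

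I do not expect a serious obstacle here; the argument is essentially bookkeeping on top of the structure theorems of Section~\ref{coset section}. The one point requiring care is the special status of the double coset $HeH=H$: there the left coset $eH$ and right coset $He$ coincide with $H$ and may legitimately contain several entries of $S$, which therefore cannot be forced onto distinct diagonal tiles — precisely why that chessboard is exempted. The only genuinely routine verifications are making the ``fill in the rest bijectively'' step explicit and noting that a left-right-cleaned tuple has at most one entry in each tile outside the $H$-chessboard, which is immediate from left-cleanedness.
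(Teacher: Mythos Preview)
Your proof is correct and follows the same underlying idea as the paper: the correspondence of columns with left cosets and rows with right cosets immediately translates ``no two entries share a non-identity left or right coset'' into ``entries can be placed on distinct diagonal tiles outside the $H$-chessboard''. The paper's own proof is a one-line appeal to this correspondence, whereas you have carefully spelled out both directions, including the bijective reordering of rows and columns; nothing is missing or wrong.
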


\begin{proof}
This is immediate from the fact that columns in chessboards correspond to left cosets of $H$, and rows correspond to right cosets. Thus, a column (resp.~row) in the chessboards contains multiple entries of $S$ if and only if the corresponding left (resp.~right) coset of $H$ contains multiple entries of $S$.
\end{proof}

Note that one can obtain a left-right transversal for $H$ by taking one 
element from each diagonal tile of each chessboard (by Corollary \ref{useful2}). More strongly, 
by left-right-cleaning and choosing an element from each unused diagonal
we have:

\begin{lem}\label{extend LR}
Let $G$ be a group, $H$ a subgroup of finite index in $G$, 
and $S$ a generating $n$-tuple of $G$. If $S$ is left-right-cleaned, and $H$ 
contains at most one entry of $S$, then there is a set $T$ containing
all the entries of $S$ which is a left-right transversal for $H$ in $G$.
\end{lem}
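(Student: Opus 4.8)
The plan is to work entirely inside the chessboard picture of Section~\ref{coset section}. First recall that, applying Theorems~\ref{complete bipartite} and~\ref{graph2} with $K=H$, the group $G$ is partitioned into finitely many chessboards $C_i$, each a \emph{square} $k_i\times k_i$ grid (since the ratio $s_i/t_i$ equals $[G:H]/[G:H]=1$) in which \emph{every} tile is non-empty, because each component of $\Gamma^G_{H,H}$ is a complete bipartite graph. A left-right transversal for $H$ is obtained by selecting one element from each tile lying on a chosen diagonal of each $C_i$, where by a diagonal of $C_i$ I mean any perfect matching between the columns (left cosets of $H$) and the rows (right cosets of $H$) of that chessboard. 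The whole proof is then a matter of choosing these diagonals so that they pass through the entries of $S$.

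Next I would use the hypotheses to locate the entries of $S$. Since $S$ is left-right-cleaned, no non-identity left coset and no non-identity right coset of $H$ contains more than one entry of $S$; together with the assumption that $H=eH$ contains at most one entry of $S$, this shows that \emph{every} left coset and \emph{every} right coset of $H$ contains at most one entry of $S$. Consequently, inside any fixed chessboard $C_i$, the entries of $S$ lying in $C_i$ occupy pairwise distinct columns and pairwise distinct rows, so the assignment sending such an entry $g$ to the pair (column of $g$, row of $g$) is a partial matching of the finite complete bipartite graph underlying $C_i$. Every partial matching of a finite complete bipartite graph extends to a perfect matching; fix such an extension as the diagonal of $C_i$. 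Now every entry of $S$ lies on a diagonal tile of its chessboard, and distinct entries lie in distinct diagonal tiles. (This is essentially Lemma~\ref{diagonals}, with the extra input ``$H$ contains at most one entry of $S$'' removing its exceptional chessboard $HeH$.)

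Finally I would build $T$: for each chessboard $C_i$ and each diagonal tile of $C_i$, put the unique entry of $S$ in that tile into $T$ if there is one, and otherwise put an arbitrary element of that tile (non-empty by the first paragraph) into $T$. Then $\tilde S\subseteq T$ by construction. Each left coset of $H$ is a column of exactly one chessboard and is matched to exactly one row along that chessboard's diagonal, hence contributes exactly one diagonal tile, from which $T$ receives exactly one element; so $T$ meets every left coset of $H$ exactly once, and symmetrically every right coset exactly once. Thus $T$ is a left-right transversal for $H$ containing all entries of $S$, as claimed.

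There is no deep obstacle here once the chessboard formalism is available; the one point needing care is the role of the hypothesis that $H$ contains at most one entry of $S$. It is exactly what stops the degenerate $1\times 1$ chessboard $HeH=H$ from being forced to absorb two entries of $S$ into its single (diagonal) tile, which would make a transversal containing all of $S$ impossible — and it leaves every other chessboard unproblematic, since any other square chessboard $HgH$ with a single column (i.e.\ $g\in N_G(H)$) is already controlled by left-cleanedness. With this hypothesis in hand the construction above runs uniformly over all chessboards.
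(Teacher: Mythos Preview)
Your proof is correct and follows essentially the same approach as the paper: both locate the entries of $S$ in distinct rows and columns of each (square) chessboard, then complete to a full ``diagonal'' and pick one element per diagonal tile. The only cosmetic difference is that the paper phrases the key step as ``re-arrange the rows and columns so that the entries of $S$ lie on the leading diagonal'', whereas you phrase it as extending a partial matching of $K_{k_i,k_i}$ to a perfect matching; these are the same operation.
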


\begin{proof}
Given that columns in chessboards correspond to left cosets of $H$, and rows correspond to right cosets, we have that no column or row in any chessboard contains more than one entry from $S$. Thus we can re-arrange the positioning of the columns and rows in each chessboard so that the entries of $S$ are all in tiles which lie on leading diagonals. Now simply choose one element from each lead-diagonal tile which does not contain an entry of $S$, and add these to the set $S$ to form the set $T$. Then $T$ contains precisely one element from each lead-diagonal tile of each chessboard, and no other elements. Thus $T$ contains precisely one element in each left coset of $H$, and precisely on element in each right coset of $H$. So $T$ is our desired left-right transversal which contains $\tilde{S}$.
\end{proof}

Combining our shifting boxes technique, along with the properties of the coset 
intersection graph from Theorem \ref{graph2}, we are able to show the following:

\begin{thm}\label{rank3 pre}
Let $G$ be a group, $S$ a generating $n$-tuple for $G$ with $n \leq 3$, and $H$ a subgroup of finite index in $G$ with $n \leq [G:H]$. Then there is a generating $n$-tuple $S'$ Nielsen-equivalent to $S$, and a left-right transversal $T$ for $H$ in $G$ with $\tilde{S}'\subseteq T$.
\end{thm}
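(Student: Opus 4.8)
The plan is to combine Lemma \ref{extend LR} with a case analysis driven by Theorem \ref{graph2}, using the freedom to shift boxes to manoeuvre all generators into the identity double coset or into distinct diagonal tiles. First I would left-right-clean $S$ using Lemma \ref{LR cleaned}, so that each non-identity left coset and each non-identity right coset of $H$ contains at most one entry of the $n$-tuple; by Lemma \ref{diagonals} this places the entries in distinct diagonal tiles of the chessboards, with the possible exception of several entries sitting in the block $HeH=H$. If after this cleaning $H$ contains at most one entry, then Lemma \ref{extend LR} finishes immediately, so the whole problem reduces to the situation where $H$ contains two or three entries of $S$ (recall $n\le 3$). I would then try to left-extract, using Lemma \ref{extract2}, one of the entries out of $H$ and into an empty left coset — but the subtlety is that after extracting to the left one may re-clog a right coset, so the extraction has to be performed respecting both partitions simultaneously.

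The heart of the argument is therefore: given that $H$ contains $\geq 2$ entries of $S$ and $n\le[G:H]$, shift one entry out of the $H$-tile into a tile that is empty for \emph{both} the left and the right coset partition, i.e.\ into a chessboard tile whose row and column are both unoccupied. Here is where Theorem \ref{graph2} does the work: the chessboards $\comp_{s_i,t_i}$ all have the same ratio $s_i/t_i=[G:H]/[G:H]=1$, so in fact $s_i=t_i$, every chessboard is square $\comp_{s_i,s_i}$, and its diagonal has $s_i$ tiles. Counting: with at most $n\le 3$ entries total and $[G:H]\ge n$ left cosets (equivalently columns) spread across the square chessboards, one shows there is always a diagonal tile whose column is empty and whose row is empty, unless the entries are packed too tightly — and the only way that packing obstruction arises when $n\le 3$ is a short, completely explicit finite check (the bad configurations would force a chessboard like $\comp_{1,1}$ to carry two entries, which contradicts cleanedness, or force $[G:H]<n$). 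So for $n=2$ and $n=3$ I would enumerate how the two or three cleaned entries can be distributed among the square chessboards and verify in each case that an appropriate target tile exists; then apply the case-4 trick of Lemma \ref{extract2} (replace some $s_i\in H$ by $s_j^{\epsilon}s_k s_i$) to move an entry there. After at most $n-1$ such moves we are reduced to $H$ containing a single entry, and Lemma \ref{extend LR} produces the desired left-right transversal $T\supseteq\tilde S'$.

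The main obstacle I expect is controlling the interaction between the two cleanings and the extraction step: a left-extraction that empties $H$ of an entry may land that entry in a right coset already occupied, destroying right-cleanedness, so one must either re-clean (and check this does not push an entry back into $H$) or, better, arrange the extraction target in advance to be a tile empty in both directions — which is exactly why the square-chessboard structure from Theorem \ref{graph2} and the smallness $n\le 3$ are both essential. A secondary nuisance is bookkeeping the finitely many distributions of $2$ or $3$ entries among chessboards of sizes $s_i$; this is routine but must be done honestly, since (as the later Theorem \ref{classify} hints) genuine exceptions to such statements do occur once $n$ is larger, so the bound $n\le 3$ cannot be cosmetic. I would present the reduction and the extraction-target lemma cleanly and relegate the $2$/$3$-entry case check to a short paragraph.
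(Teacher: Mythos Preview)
Your setup matches the paper's: left-right-clean via Lemma~\ref{LR cleaned}, observe that $n=1,2$ are immediate, and reduce the $n=3$ case to the situation $h_1,h_2\in H$, $g\notin H$. The gap is in the extraction step.

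You propose to first locate, by a counting argument on the square chessboards, a diagonal tile whose row and column are both empty, and \emph{then} apply the case-4 replacement $s_i\mapsto s_j^{\epsilon}s_k s_i$ of Lemma~\ref{extract2} ``to move an entry there''. But that replacement does not let you aim at a chosen tile: it lands $s_i$ in the left coset $s_j^{\epsilon}s_kH$ and simultaneously in the right coset $Hs_j^{\epsilon}s_k s_i$, both of which are dictated by the group multiplication, not by you. Knowing that an empty diagonal tile exists somewhere gives no mechanism for reaching it by a Nielsen move. (Your counting sketch is also off as written: the chessboard $HeH=H$ is $\comp_{1,1}$ and legitimately carries both $h_1$ and $h_2$ after cleaning --- this is precisely the exception in Lemma~\ref{diagonals} --- so ``a $\comp_{1,1}$ carrying two entries contradicts cleanedness'' fails in exactly the configuration you must handle.)

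The paper's proof runs in the opposite direction: rather than picking a target tile and trying to hit it, it examines where the available Nielsen products \emph{can} land and shows that one of them is always good. Concretely, it branches on the location of $g^2$. If $g^2\notin HgH\cup H$ (a fresh double coset), send $h_1\mapsto g^2h_1$. If $g^2\in HgH$, then $g^2$ already sits in a new row and new column of $g$'s chessboard, and one of $h_ig^2$ or $h_2^{-1}h_1g^2$ avoids $g$'s column as well. If $g^2\in H$, transitivity forces some $h_i^{\epsilon}g$ to leave $g$'s column and some $gh_j^{\delta}$ to leave $g$'s row, and a short sub-case analysis combines these. In every branch the algebraic shape of the chosen product is what guarantees simultaneous control of the left and right coset; that is the idea your sketch is missing.
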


\begin{proof}
The case when $n=1$ is trivial. The case when $n=2$ is done as 
follows:
\\Left-right-clean $S$ to form 
$S'=(a,b)$. Clearly we can't have $a, b \in H$, or else $H$ can't have index 
$\geq 2$. So at most one of $a,b$ lies inside $H$. But then by 
Lemma \ref{extend LR} we can extend $\tilde{S}'$ to a set $T$ which is a left-right 
transversal for $H$. Seeing as $T$ contains $\tilde{S}'$, then it generates $G$.

The case where $n=3$ is much more complicated, and we need to consider 
several sub-cases. So, left-right-clean $S$ to form $S'=(a,b,c)$. Clearly we cannot have 
$a, b,c \in H$, or else $H$ does not have index at least $3$. So at most two of 
$a,b$ lie inside $H$. Again, if only one of $a,b,c$ lies inside $H$ then we 
can apply Lemma \ref{extend LR} as before. So we are left to consider what 
happens when two of $a,b,c \in H$ (without loss of generality, re-label them as 
$h_{1}, h_{2} \in H$ and $g \notin H$). 

Case 1. $g^{2} \notin HgH \cup H$ (i.e., $g^{2}$ lies in a different 
chessboard to $g$ and $h_{1}, h_{2}$).
\\Make the Nielsen moves $h_{1} \mapsto g^{2}h_{1}$; this clearly lies in the same left coset (and hence same chessboard) as $g^{2}$ (see Figure \ref{fig1}).

\begin{figure}[ht]
\resizebox{8cm}{!}{\includegraphics{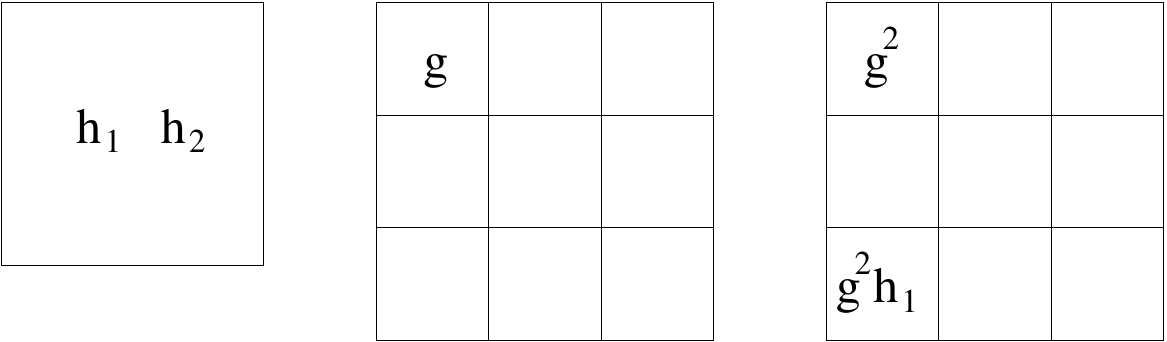}}
\caption{}
\label{fig1}
\end{figure}

So now each of $g, h_{2}, g^{2}h_{1}$ lie in different chessboards, thus the triple $S'':=(g, h_{2}, g^{2}h_{1})$ is left-right cleaned. As only $h_{2}$ lies inside $H$, we can use Lemma \ref{extend LR} to extend $\tilde{S}''$ to a set $T$ which is a left-right transversal for $H$. Seeing as $T$ contains $\tilde{S}''$, then it generates $G$.
\\If case 1 does not occur, then we proceed to case 2.

Case 2. $g^{2} \in HgH$ (i.e., $g^{2}$ lies in the same chessboard as $g$).
\\Clearly $g^{2}H \neq gH$ and $Hg^{2} \neq Hg$; otherwise we would have $g \in H$ which contradicts our initial hypothesis. So $g^{2}$ lies in a different left coset and different right coset to $g$ (i.e., in a different column and row to $g$ in $HgH$). Consider $h_{1}g^{2}$ and $h_{2}g^{2}$ (which both lie in the same right coset as $g^{2}$, and hence in a different right coset to $g$). If $h_{i}g^{2}H \neq gH$ for some $i \in \{1,2\}$, then make the Nielsen moves $h_{i} \mapsto h_{i}g^{2}$ which lies in a different left and different right coset to $g$ (but in the same chessboard) (see Figure \ref{fig2}).

\begin{figure}[ht]
\resizebox{6cm}{!}{\includegraphics{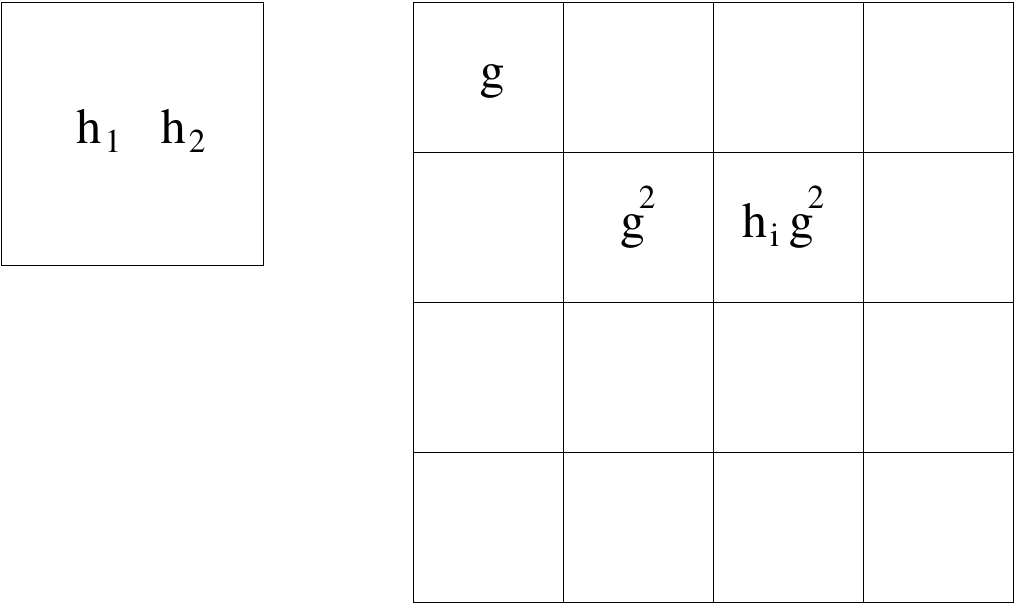}}
\caption{}
\label{fig2}
\end{figure}

If on the other hand $h_{1}g^{2}H=h_{2}g^{2}H=gH$, then $h_{2}^{-1}h_{1}g^{2}H=g^{2}H$ and so we make the Nielsen moves $h_{1} \mapsto h_{2}^{-1}h_{1}g^{2}$ which lies in a different left and different right coset to $g$ (but in the same chessboard) (see Figure \ref{fig3}).

\begin{figure}[ht]
\resizebox{6cm}{!}{\includegraphics{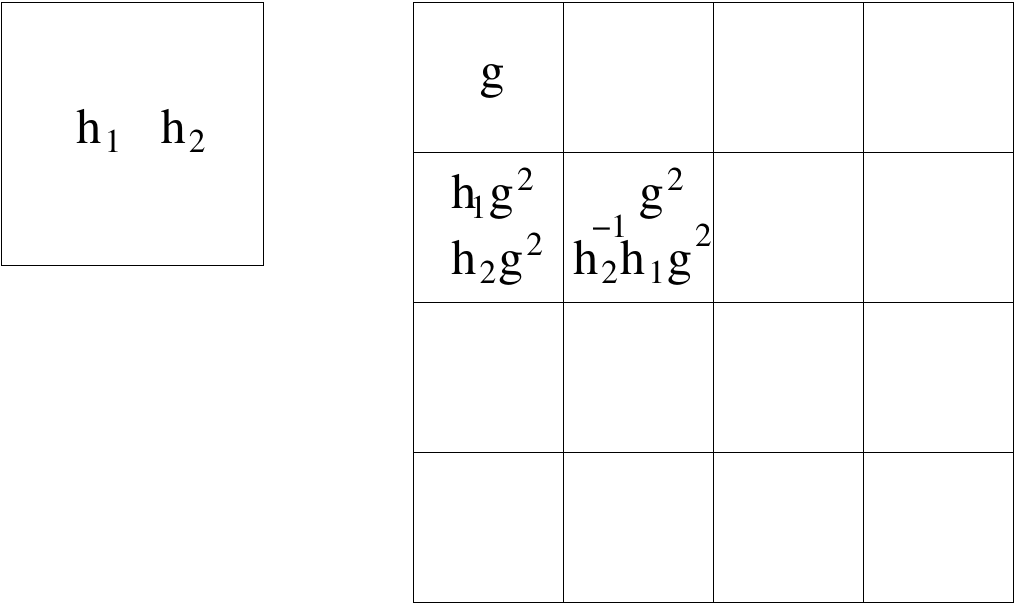}}
\caption{}
\label{fig3}
\end{figure}

Either way, we now have a triple $S''$ for which,  after permutation of some rows and columns, has entries which lie along diagonal tiles of the chessboards.  As only one such entry lies inside $H$, we can use Lemma \ref{extend LR} to extend $\tilde{S}''$ to a set $T$ which is a left-right transversal for $H$. Seeing as $T$ contains $\tilde{S}''$, then it generates $G$.
\\If neither case 1 nor case 2 occur, then we proceed to case 3.

Case 3. $g^{2} \in H$.
\\By the transitivity of the action of $G$ on left (and right) cosets of $H$, there must be some $h_{i}$ ($i \in \{1,2\}$) and some $\epsilon \in \{\pm 1\}$ with $h_{i}^{\epsilon}gH \neq gH$, and similarly some $h_{j}$ ($j \in \{1,2\}$) and some $\delta \in \{\pm 1\}$ with $Hgh_{j}^{\delta} \neq Hg$. If $i\neq j$, then we make the Nielsen move $h_{i} \mapsto h_{i}^{\epsilon}g$ followed by the Nielsen move $g \mapsto gh_{j}^{\delta}$ (see Figure \ref{fig4}). 

\begin{figure}[ht]
\resizebox{6cm}{!}{\includegraphics{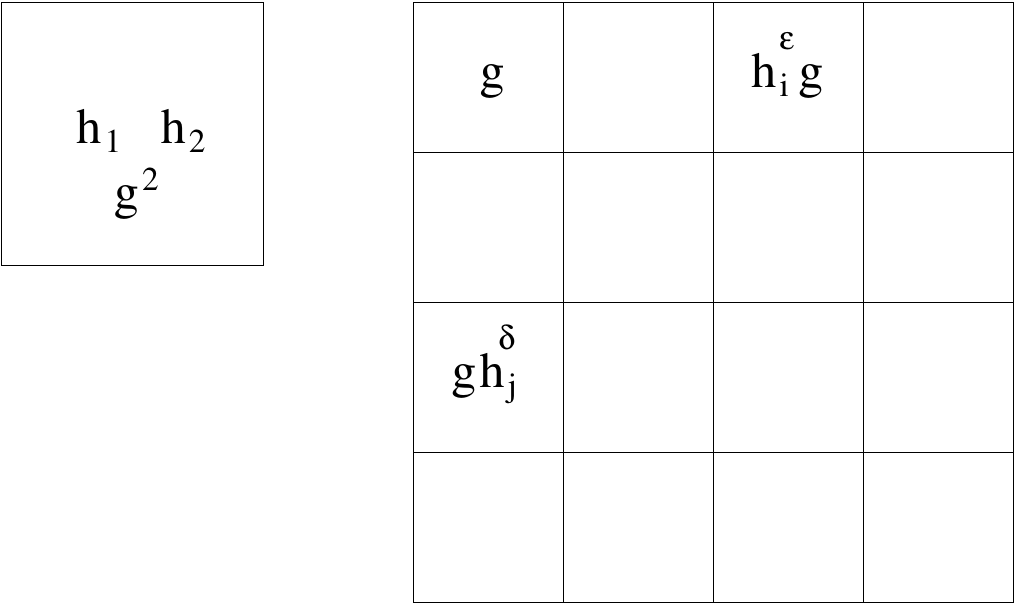}}
\caption{}
\label{fig4}
\end{figure}

If on the other hand $i=j$ (say $i=j=1$, without loss of generality), then consider the element $h_{2}gh_{1}^{\delta}$. If $h_{2}gh_{1}^{\delta}H \neq gH$, then $h_{2}gh_{1}^{\delta}$ lies in a different left coset and different right coset to $g$, and so we make the Nielsen moves $h_{2} \mapsto h_{2}gh_{1}^{\delta}$ (see Figure \ref{fig5}).

\begin{figure}[ht]
\resizebox{6cm}{!}{\includegraphics{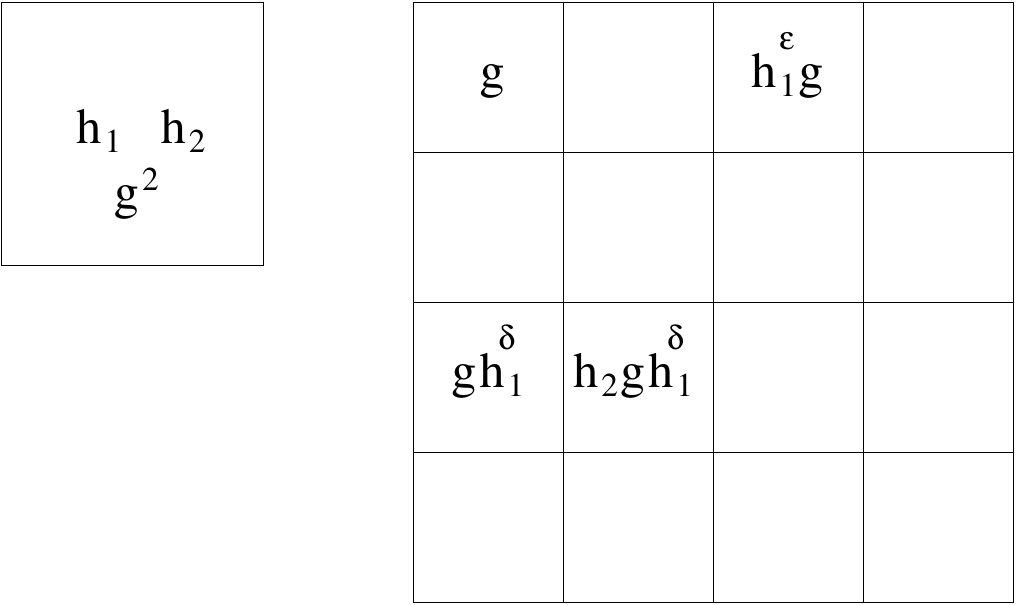}}
\caption{}
\label{fig5}
\end{figure}

If however $h_{2}gh_{1}^{\delta}H = gH$, then $h_{2}gh_{1}^{\delta}$ lies in a different left coset and different right coset to $h_{1}^{\epsilon}g$, and so we make the Nielsen moves $h_{2} \mapsto h_{2}gh_{1}^{\delta}$ followed by $g \mapsto h_{1}^{\epsilon}g$ (see Figure \ref{fig6}).

\begin{figure}[ht]
\resizebox{6cm}{!}{\includegraphics{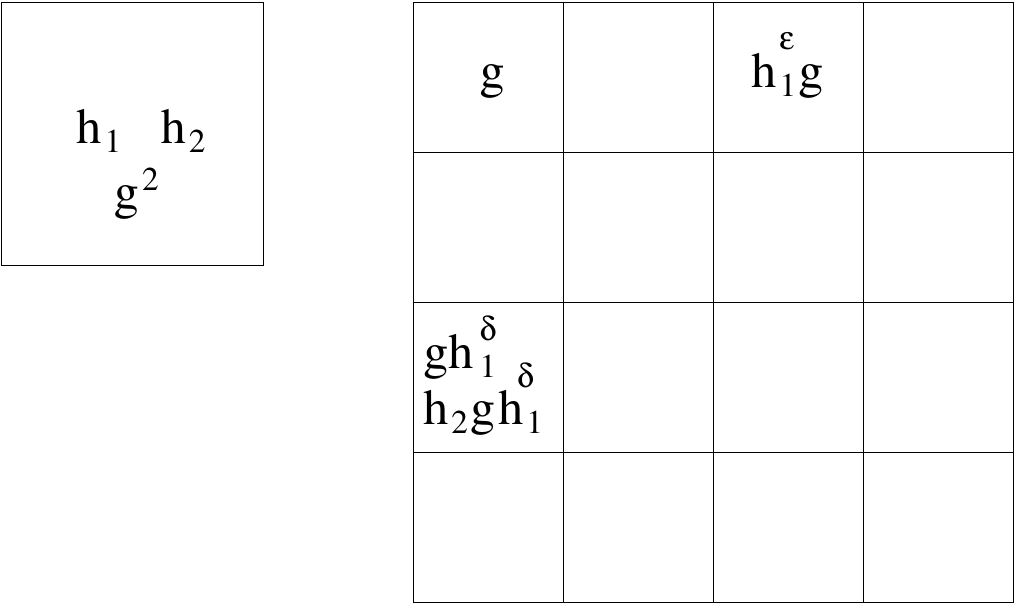}}
\caption{}
\label{fig6}
\end{figure}

In all the subcases considered here, we end up with a triple $S''$ for which,  after permutation of some rows and columns, has entries which lie along diagonal tiles of the chessboards.  As only one such entry lies inside $H$, we can use Lemma \ref{extend LR} to extend $\tilde{S}''$ to a set $T$ which is a left-right transversal for $H$. Seeing as $T$ contains $\tilde{S}''$, then it generates $G$.
\end{proof}

Given the existential nature of the results stated in this section, it is always possible (via naive searches) to algorithmically construct the relevant generating tuples and transversals mentioned in the results. However, by following our proofs more closely, one can algorithmically construct these in a manner much faster than mere naive searches.

\begin{thm}\label{rank3}
Let $G$ be a group with $d(G) \leq 3$, and $H$ a subgroup 
of finite index in $G$. Then the following are equivalent:
\\ $1$. There exists a left-right transversal $T$ for $H$ in $G$ with $\langle T \rangle=G$.
\\ $2$. $[G:H] \geq d(G)$.
\end{thm}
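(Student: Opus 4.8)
The plan is to deduce Theorem \ref{rank3} directly from Theorem \ref{rank3 pre} together with the (already established) necessity of the rank bound.

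\medskip

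\noindent\textbf{Proof plan.} The implication $1\Rightarrow 2$ is immediate and requires no work: if $T$ is any transversal (left, right, or left-right) for $H$ in $G$ with $\langle T\rangle = G$, then $|T| = [G:H]$, so $G$ is generated by a set of size $[G:H]$, whence $[G:H]\geq d(G)$ by definition of the rank. For $2\Rightarrow 1$, set $n := d(G)$, so that $n\leq 3$ by hypothesis and $n\leq [G:H]$ by assumption $2$. Choose a generating $n$-tuple $S$ of $G$ realising the rank (such an $S$ exists since $G$ is finitely generated of rank $n$). Now apply Theorem \ref{rank3 pre} to $G$, $S$ and $H$: its hypotheses $n\leq 3$ and $n\leq [G:H]$ are exactly what we have arranged. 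This produces a generating $n$-tuple $S'$ Nielsen-equivalent to $S$, and a left-right transversal $T$ for $H$ in $G$ with $\tilde S'\subseteq T$. Since $S'$ is Nielsen-equivalent to a generating tuple it is itself generating, so $G = \langle \tilde S'\rangle \subseteq \langle T\rangle \subseteq G$, giving $\langle T\rangle = G$. Thus $T$ is the desired generating left-right transversal, establishing $2\Rightarrow 1$.

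\medskip

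\noindent I do not anticipate any genuine obstacle here: all the real content — the delicate case analysis via shifting boxes in chessboards — has already been carried out in Theorem \ref{rank3 pre}, and Theorem \ref{rank3} is simply the packaging of that result into an ``if and only if'' statement phrased in terms of $d(G)$ rather than an arbitrary generating $n$-tuple. The only point to be careful about is the translation between ``generating $n$-tuple of size $d(G)$'' and ``$[G:H]\geq d(G)$'', and the trivial observation that Nielsen moves preserve the property of being a generating tuple (noted already in the definition of Nielsen move in the excerpt). One could optionally remark that, exactly as for Theorem \ref{main}, the statement also holds verbatim with ``left-right transversal'' unchanged since a left-right transversal $T$ satisfies $\langle T\rangle = \langle T^{-1}\rangle$ and $T^{-1}$ is again a left-right transversal, though this symmetry is not needed for the proof itself.
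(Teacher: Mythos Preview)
Your proposal is correct and follows exactly the same approach as the paper: the paper's proof reads in its entirety ``That $1\Rightarrow 2$ is immediate. Conversely, $2\Rightarrow 1$ can be seen from Theorem \ref{rank3 pre}.'' You have simply spelled out the details that the paper leaves implicit.
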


\begin{proof}
That $1\Rightarrow 2$ is immediate; $2\Rightarrow 1$ can be seen from Theorem \ref{rank3 pre}.
\end{proof}

This leads us to pose the following question:

\begin{ques}
 Does Theorem \ref{rank3}  hold if we change the hypothesis `$d(G)\leq 3$' to `$d(G)$ finite'?
\end{ques}

In the  proof of Theroem \ref{rank3 pre}, each case $n=1,2,3$ is shown by analysing a 
(increasing) finite number of possible scenarios, via repeated application of 
shifting boxes. We have not yet extended this to the case $n\geq 4$, as 
the number of scenarios to consider becomes very large and complex. We believe 
that the most elegant way to do this would be to establish some technique that,
given a left-right-cleaned generating $n$-tuple $S$ of $G$ with more than one 
entry of $S$ lying in $H$, left-right-extracts an entry of $S$ lying in $H$ 
to an empty square on the diagonal of one of the chessboards. We have not been 
able to do this, but believe that it may indeed be possible.

There is an equivalence between our \emph{shifting boxes} technique, and the Schreier graph of the action of $G$ on $G/H$ with respect to a generating set for $G$. Though the Schreier graph could be a valid approach to take, it seems  more cumbersome to work with for the problems we have considered, and proofs using Schreier graphs appear to be longer and less intuitive than what we have done here.

We note that, in any extension of Theorem \ref{rank3} to groups needing
more than three generators,
we need only consider free groups, as the following shows:

\begin{prop}\label{reduce to free}
Theorem $\ref{rank3}$ holds for all finite rank groups $($rather than just groups of rank at most $3)$ if and only if it holds for all finite rank free groups.
\end{prop}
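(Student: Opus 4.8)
The plan is to prove both implications of the equivalence by reducing the case of an arbitrary finite-rank group $G$ to the case of a free group $F$ of the same rank, using a free presentation $\pi\colon F\twoheadrightarrow G$.

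First I would set up the reduction. Write $n=d(G)$ and pick a free group $F$ of rank $n$ with a surjection $\pi\colon F\to G$; let $N=\ker\pi$. Given a finite-index subgroup $H<G$ with $[G:H]\geq n$, its preimage $\widehat H:=\pi^{-1}(H)<F$ contains $N$, is of finite index in $F$ with $[F:\widehat H]=[G:H]\geq n=d(F)$ (note $d(F)=n$ as well, since any generating $n$-tuple of $F$ maps onto a generating set of $G$, forcing $n\geq d(G)=n$, and $F$ needs exactly $n$ generators). So if Theorem~\ref{rank3} (in its free-group form) applies to $\widehat H<F$, we obtain a left-right transversal $\widehat T$ for $\widehat H$ in $F$ with $\langle\widehat T\rangle=F$. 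The key observation is that $\pi$ induces a bijection between left cosets of $\widehat H$ in $F$ and left cosets of $H$ in $G$ (because $N\subseteq\widehat H$), and likewise for right cosets; hence $T:=\pi(\widehat T)$ is a left-right transversal for $H$ in $G$, and $\langle T\rangle=\langle\pi(\widehat T)\rangle=\pi(\langle\widehat T\rangle)=\pi(F)=G$. This gives $2\Rightarrow 1$ for $G$ from the free case. The implication $1\Rightarrow 2$ is immediate and independent of freeness (it is just the necessary-condition remark from the introduction), so it needs no reduction.

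For the converse direction of the ``if and only if'' — that the free case is \emph{necessary} — this is trivial, since free groups of finite rank are in particular finite-rank groups, so if Theorem~\ref{rank3} holds for all finite-rank groups it holds a fortiori for all finite-rank free groups.

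The step I expect to require the most care is verifying that $T=\pi(\widehat T)$ is genuinely a transversal, i.e.\ that $\pi$ restricted to $\widehat T$ is injective and that the cosets it hits are exactly all of them. This hinges on the fact that $N\subseteq\widehat H$: two elements $x,y\in F$ satisfy $xN=yN$ implies $x\widehat H=y\widehat H$, but more relevantly $\pi(x)H=\pi(y)H \iff xy^{-1}\in\widehat H \iff x\widehat H=y\widehat H$ (using $x\widehat H = y\widehat H \iff x^{-1}y \in \widehat H$, and $\widehat H\supseteq N$ so membership in $\widehat H$ is detected after applying $\pi$), and the analogous statement for right cosets. Once this coset correspondence is nailed down, everything else is formal. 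I would also remark that since $N\subseteq\widehat H$ one could even work with $\pi$ as a map of coset spaces directly, but the element-level argument is cleaner to write. A subtlety worth a sentence: one must check $d(F)=n=d(G)$ so that the index hypothesis $[F:\widehat H]\geq d(F)$ transfers; this is exactly the content of the standard fact that $d(G)\leq n$ for any $n$-generated quotient and $d(F)=\operatorname{rank}(F)$.
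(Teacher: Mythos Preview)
Your proposal is correct and follows essentially the same approach as the paper: take a surjection $\pi\colon F_n\twoheadrightarrow G$, pull $H$ back to $\widehat H=\pi^{-1}(H)$ of the same index, apply the free-group case to get a generating left-right transversal $\widehat T$, and push forward to $T=\pi(\widehat T)$. The paper's proof is terser (it simply cites as ``standard'' the equality of indices and the fact that $f(T)$ is a left-right transversal), whereas you spell out the coset correspondence via $N\subseteq\widehat H$; your parenthetical justification that $d(F)=n$ is slightly muddled in its wording but the conclusion is correct since $F$ is free of rank $n$.
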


\begin{proof}
Suppose Theorem \ref{rank3} holds for all finite rank free groups. Let $G$ be 
a group with $d(G)=n$, and $H<G$ a subgroup of index $[G:H]=k \geq n$. Then 
there is a surjection $f: F_{n} \twoheadrightarrow G$, and it is a standard
fact that the preimage $f^{-1}(H)$ also has index $k$ in $F_{n}$.
By hypothesis, there is a left-right transversal $T$ of $f^{-1}(H)$ which 
generates $F_{n}$; it follows that $f(T)$ is a left-right transversal of $H$ 
which generates $G$.
\end{proof}

Thus we are able to restrict our investigation to the case of finite rank free 
groups. By doing this, we are able to make use of the rich theory of Stallings 
graphs, which are precisely Schreier graphs for subgroups of free groups \cite{KapMia}. This simplification to free groups also extends to all earlier results in this section before Theorem \ref{rank3} (they are true for all 
groups if and only if they are true for free groups). With the help of Enric 
Ventura and Jordi Delgado we have replicated proofs of our 1-sided transversal results (Lemmata \ref{clean}--\ref{extract2} and Theorem \ref{main}) using the framework of Stallings graphs. However, we stress that these are more complicated, 
and our attempts to generalise (or even re-prove) Theorems \ref{rank3 pre} and \ref{rank3} using Stallings graphs have so far been unsuccessful.

\section{An application of shifting boxes: finding primitive elements}

Recall that a primitive element of a finite rank free group $F_{n}$ is 
one which lies in \emph{some} generating set of size precisely $n$, which
is equivalent to being an element of a free basis for $F_n$. 
If $G$ is a group of finite rank $n$, then we say a \emph{primitive element} in $ G$ is an element lying in \emph{some} generating set of size $n$ for $G$. This reduces to the standard definition of primitive elements in finitely generated free groups.

An obvious question to ask is which subgroups of $F_n$ (or more generally, rank $n$ groups) contain a primitive element
(we can ask this for both finite and infinite index subgroups).
We first consider the case of normal subgroups.

The following is immediate by considering the image under the natural
homomorphism of a generating set of minimal size containing
the relevant primitive element:

\begin{lem}\label{rank}
Let $G$ be a group of finite rank $n$, and $N$ 
a normal subgroup of $G$. If $N$ contains some primitive element of $G$, then 
$d(G/N) <n$.
\end{lem}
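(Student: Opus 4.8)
The plan is to produce a generating set of size $n$ for $G$ containing the given primitive element $x \in N$, and then push it forward through the quotient map $\pi \colon G \to G/N$. By definition of a primitive element of a rank $n$ group, there is a generating set $\{x, g_2, \ldots, g_n\}$ of size exactly $n$ for $G$ with $x$ as one of its members. First I would apply $\pi$: since $\pi$ is surjective, $\{\pi(x), \pi(g_2), \ldots, \pi(g_n)\}$ generates $G/N$. But $x \in N$, so $\pi(x) = e$, and hence $\{\pi(g_2), \ldots, \pi(g_n)\}$ already generates $G/N$. This is a generating set of size at most $n-1$, so $d(G/N) \leq n-1 < n$, which is exactly the claim.

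There is essentially no obstacle here — the argument is the one sketched in the sentence immediately preceding the statement ("considering the image under the natural homomorphism of a generating set of minimal size containing the relevant primitive element"). The only point worth spelling out is why we may take the generating set to have size exactly $n$ rather than merely at most $n$: this is built into the definition of primitive element for a rank $n$ group, since $n = d(G)$ and any generating set has size at least $d(G)$. Thus the generating set realising primitivity of $x$ has size precisely $n = d(G)$, and dropping the now-trivial entry $\pi(x)$ leaves at most $n-1$ generators of $G/N$.

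\begin{proof}
As $G$ has rank $n$ and $N \trianglelefteq G$ contains a primitive element $x$ of $G$, there is a generating set $\{x, g_{2}, \ldots, g_{n}\}$ of size $n$ for $G$. Let $\pi \colon G \to G/N$ be the natural homomorphism. Since $\pi$ is surjective, $\{\pi(x), \pi(g_{2}), \ldots, \pi(g_{n})\}$ generates $G/N$. But $x \in N$, so $\pi(x)$ is trivial, and hence $\{\pi(g_{2}), \ldots, \pi(g_{n})\}$ generates $G/N$. This is a generating set of size at most $n-1$, so $d(G/N) < n$.
\end{proof}
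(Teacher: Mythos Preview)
Your proof is correct and is exactly the argument the paper intends: take a generating $n$-tuple containing the primitive element, push it through the quotient map, and observe that one entry becomes trivial, leaving at most $n-1$ generators for $G/N$. There is nothing to add.
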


The converse statement is not true, even in the special case that $G=F_{n}$, as was shown in \cite{Nos} and in
\cite{Ev} when $N$ has infinite index. It is currently open
if $N$ has finite index and here we briefly mention the
connection with product replacement graphs. Much more can
be found in the survey article
\cite{Pak} of Pak which contains a range of references.

Given a finitely generated group $G$ and an integer $n\geq d(G)$, the
product replacement graph $\Gamma_n(G)$ has vertices the generating
$n$-tuples of $G$ with edges between two vertices if one is the image of
another under a standard Nielsen move. 
A big area of study in this topic is the connectivity of
$\Gamma_n(G)$. 
Now if $n\geq d(G)+\overline{d}(G)$, where $\overline{d}$ is the
maximum size of a \emph{minimal} generating set (one in which no proper 
subset is a generating set), then it is known that
$\Gamma_n(G)$ is connected. However $\overline{d}$
can be a very hard quantity to evaluate in practice: indeed the result
of Whiston mentioned earlier which was used by Cameron is that
$\overline{d}(S_n)=n-1$ and the proof needs CFSG. 
It can happen that $\Gamma_n(G)$ is
disconnected when $n=d(G)$ (for instance finite abelian groups)
but no example is known of a finite group $G$ and an integer $n>d(G)$
where $\Gamma_n(G)$ is disconnected. 
If $G$ is finite soluble and
$n>d(G)$ then Dunwoody showed in \cite{Dunw} that $\Gamma_n(G)$ is
connected. The relation with primitive elements, as noted by
Dunwoody in that paper, is that if $N$ is a normal subgroup of $F_n$
containing no primitive element but $G=F_n/N$ has $d(G)<n$ then
$\Gamma_n(G)$ is disconnected. Hence
there are examples of infinite finitely generated groups $G$ with
$\Gamma_n(G)$ disconnected by the papers cited above, but a normal
subgroup $N$ of finite index 
containing no primitive element and with $d(F_n/N)<n$ would give rise to
a finite group $G$ and integer $n>d(G)$ with $\Gamma_n(G)$ disconnected; the
existence of which is currently unknown.

Our shifting boxes technique enables us to explore the location of primitive elements relative to cosets of a finite index subgroups, in the following ways. We write $[n]$ for the set of integers $\{1, \ldots, n\}$, and $X \Delta Y$ for symmetric difference.

\begin{lem}\label{primitive1 pre}
Let $G$ be any group with $d(G)=n$,
and $H$ a subgroup of finite index in $G$ 
with $[G:H] < 3 \cdot 2^{n-1}$. If $H$ contains no primitive elements of $G$, then $H$ contains the square of every primitive element of $G$.
\end{lem}

\begin{proof} 
Let $S=(g_{1}, \ldots, g_{n})$ be any left-cleaned generating $n$-tuple for $G$. 
For any $\emptyset \neq M =\{i_{1}, \ldots,  i_{k}\} \subseteq [n]$, ordered so $i_{1} < \ldots < i_{k}$, define the unique word  $w_{M}:= g_{i_{k}}g_{i_{k-1}} \cdots g_{i_{1}}$ Define the disjoint sets of words $A:= \{w_{M}\ |\ \emptyset \neq M \subseteq \{2, \ldots, n\} \textnormal{ or } M=\{1\}\}$, $B:=\{w_{M}g_{1}\ |\ \emptyset \neq M \subseteq [n]\}$. 
Now set $T:=A \sqcup B$, and thus $|T|=  3 \cdot 2^{n-1}-1$. By construction, the only element in $T$ which might not be primitive is $g_{1}^{2}$, and the rest are primitive by Nielsen transformations: for any $w_{M}g_{1} \in B$ with $M \neq \{1\}$ we take some $1 \neq i \in M$ and perform the Nielsen transformation $g_{i} \mapsto w_{M}g_{1}$ ($g_{i}$ appears precisely once in $w_{M}g_{1}$), a similar argument works for any $w_{M} \in A$.

We claim that, for any pair of distinct words $x,y \in T$, if $xH=yH$ then either $H$ contains a primitive element or $g_{1}^{2}\in H$ (possibly both). We consider all cases:

1. $ x,y \in A$, so $x=w_{M}, y=w_{M'}$, with $M \neq M'$. Then $w_{M}^{-1}w_{M'}\in H$ is primitive, as there is some $i \in M \Delta M'$ so Nielsen transform $g_{i} \mapsto w_{M}^{-1}w_{M'}$.

2. Precisely one of $x,y$ lie in $A$ (say $x \in A$), so $x=w_{M}$ and $y=w_{M'}g_{1}$. We consider all subcases: 
2A) $1 \notin M, M'$. In this case, $w_{M}^{-1}w_{M'}g_{1}\in H$ is primitive (Nielsen transform $g_{1} \mapsto w_{M}^{-1}w_{M'}g_{1}$). 
2B) $1 \in M'$, $1 \notin M$, and $M' = M \cup \{1\}$. In this case, $w_{M}^{-1}w_{M'}g_{1}=g_{1}^{2}\in H$. 2C) $1 \in M'$, $1 \notin M$, and there is some $1<j \in M\Delta M'$. In this case, $w_{M}^{-1}w_{M'}g_{1}\in H$ is a primitive element, as we can Nielsen transform $g_{j} \mapsto w_{M}^{-1}w_{M'}g_{1}$. 
2D) $M=\{1\}$.  If $M'=\{1\}$ then $w_{M}^{-1}w_{M'}g_{1}=g_{1}\in H$ is a primitive element. Otherwise, there is some $1 \neq j \in M$, in which case $w_{M}^{-1}w_{M'}g_{1}\in H$  is a primitive element as we can Nielsen transform $g_{j} \mapsto w_{M}^{-1}w_{M'}g_{1}$.

3. $x,y \in B$. In this case, $x=w_{M}g_{1}$, $y=w_{M'}g_{1}$ ($M\neq M'$). If there is some $1\neq i \in M \Delta M'$, then the element $g_{1}^{-1}w_{M}^{-1}w_{M'}g_{1}\in H$ is primitive. Otherwise, $M'=M\cup \{1\}$ (or vice-versa), in which case $g_{1}^{-1}w_{M}^{-1}w_{M'}g_{1}=g_{1}^{\pm 1}\in H$ is primitive.

Suppose that $H$ contains no primitive element. Since $|T| = 3 \cdot 2^{n-1}-1 \geq [G:H]$ then either two elements of $T$ lie in the same coset (so by the claim above, $g_{1}^{2}\in H$ as $H$ contains no primitive element), or one element of $T$ lies in $H$ (which must be $g_{1}^{2}$, as all other elements of $T$ are primitive). So $g_{1}^{2} \in H$.

Now, take \emph{any} primitive element $x\in G$, which is part of some generating set $\{x, y_{2}, \ldots, y_{n}\}$ for $G$ (which must be left-cleaned, otherwise $H$ would contain a primitive element). Using the exact same argument above, with $g_{1}:=x$, $g_{i}:=y_{i}$ for all $2 \leq i \leq n$, we see that if $H$ contains no primitive element then $x^{2} \in H$. So $H$ contains the square of \emph{every} primitive element.
\end{proof}

We can strengthen Lemma \ref{primitive1 pre} as follows:

\begin{lem}\label{primitive1}
Let $G$ be any group with $d(G)=n$,
and $H$ a subgroup of finite index in $G$ 
with $[G:H] < 3 \cdot 2^{n-1}$. If $H$ contains no primitive elements of $G$, then $H$ is normal in $G$ and $G/H \cong C_{2}^{m}$ for some $m \leq n$.
\end{lem}

\begin{proof}
Suppose $H$ contains no primitive element. Then, by Lemma \ref{primitive1 pre}, $H$ contains the square of every primitive element. Set $T:=\{g^{2}\ |\ g \textnormal{ is primitive in }G \}$. Then $T$ is a normal subset of $G$, since the conjugate of a primitive element is again a primitive element (conjugation is an automorphism). Thus $\langle  T  \rangle \vartriangleleft G$; the (normal) subgroup generated by all the squares of primitive elements of $G$. So by hypothesis, $T< H$. Now take any generating set $\{t_{1}, \ldots, t_{n}\}$ for $G$, then $t_{i}^{-1}t_{j}$ is a primitive element, for any pair $i, j$ with $i \neq j$. Thus $t_{i}^{2}, t_{j}^2, (t_{i}^{-1}t_{j})^{2}$ lie in $\langle  T  \rangle$, and hence so will $t_{i}^{2} (t_{i}^{-1}t_{j})^{2}  t_{j}^{-2} = [t_{i}, t_{j}] $. So $G/\langle  T  \rangle$ is abelian as $\langle  T  \rangle$ is normal and contains the commutator of every pair in the generating set $\{t_{1}, \ldots, t_{n}\}$ for $G$. So $\langle  T  \rangle $, and hence $H$, contain the commutator subgroup $[G,G]$. Thus $H$ is normal in $G$ and $G/H$ is generated by the images of $\{t_{1}, \ldots, t_{n}\}$, all of which have order 2 in this quotient. So $G/H  \cong C_{2}^{m}$ for some  $m \leq n$.
\end{proof}

We now give the following complete characterisation of finite index subgroups of a group of rank $n$ 
which contain primitive elements, up to index $3 \cdot 2^{n-1}-1$.

\begin{thm}\label{classify} 
Let $G$ be any group with $d(G)=n$ and
let $H$ be a subgroup of finite index in $G$ with $[G:H] < 3 \cdot 2^{n-1}$. 
Then $H$ contains no primitive elements of $G$ if and only if $H$
is normal in $G$ and the quotient $G/H$ is isomorphic to $C_{2}^{n}$,
whereupon every coset distinct from $H$ contains a primitive element of $G$.
\end{thm}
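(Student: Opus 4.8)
The plan is to obtain Theorem \ref{classify} from Theorem \ref{primitive1} and Lemma \ref{rank}, the remaining work being pure $\mathbb{F}_2$-linear algebra. The key observation throughout is that if $\pi : G \twoheadrightarrow Q$ is the quotient map and $(g_1,\dots,g_n)$ a generating $n$-tuple of $G$, then a Nielsen move on $(g_1,\dots,g_n)$ induces the corresponding Nielsen move on $(\pi(g_1),\dots,\pi(g_n))$; when $Q$ is elementary abelian these are elementary operations on a tuple of vectors over $\mathbb{F}_2$, and every such operation lifts back. For the easy direction: if $H \vartriangleleft G$ with $G/H \cong C_2^n$ and $H$ contained a primitive element, then Lemma \ref{rank} would force $d(G/H) < n$, contradicting $d(C_2^n) = n$; so $H$ contains no primitive element, and the claim that every other coset does contain one is the final clause, handled below.

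\textbf{The converse.} Assume $H$ contains no primitive element. By Theorem \ref{primitive1}, $H$ is normal and $G/H \cong C_2^m$ for some $m \le n$, and I must rule out $m < n$. Suppose $m < n$ and fix a generating $n$-tuple $(g_1,\dots,g_n)$ of $G$. Its images $\bar g_1,\dots,\bar g_n$ are $n$ vectors in the $m$-dimensional $\mathbb{F}_2$-space $G/H$; as $n > m$ they are linearly dependent, so $\sum_{i\in S}\bar g_i = 0$ for some nonempty $S = \{i_1 < \dots < i_k\}$. Successively right-multiplying the entry $g_{i_1}$ by $g_{i_2},\dots,g_{i_k}$ is a legal sequence of $k-1$ Nielsen moves (legal precisely because the indices in $S$ are distinct), producing a generating $n$-tuple whose $i_1$-th entry $w = g_{i_1}g_{i_2}\cdots g_{i_k}$ satisfies $\bar w = \sum_{i\in S}\bar g_i = 0$, so $w \in H$. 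Since $d(G) = n$, the entries of a generating $n$-tuple are pairwise distinct, so $w$ belongs to a generating set of size $n$ and is therefore primitive --- contradicting the assumption on $H$. (When $k = 1$ this degenerates to the observation that $g_{i_1}$ itself is primitive and lies in $H$.) Hence $m = n$.

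\textbf{The nontrivial cosets.} Now let $H \vartriangleleft G$ with $G/H \cong C_2^n$ and pick a coset $gH \ne H$, so $\bar g \ne 0$. For any generating $n$-tuple $(g_1,\dots,g_n)$, the images $\bar g_1,\dots,\bar g_n$ generate, hence span, the $n$-dimensional space $G/H$, so they form a basis and $\bar g = \sum_{i\in S}\bar g_i$ for some nonempty $S = \{i_1 < \dots < i_k\}$. Exactly as in the converse, Nielsen moves turn the tuple into one whose $i_1$-th entry is $w = g_{i_1}\cdots g_{i_k}$; then $w$ is primitive (again because $d(G)=n$) and $\bar w = \bar g$, so $w \in gH$, as desired.

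\textbf{Main obstacle.} None of this is deep once Theorem \ref{primitive1} is available; the one place requiring care is the bookkeeping in the converse --- checking that the prescribed Nielsen moves are legal and, above all, that the resulting modified entry is genuinely a primitive element of $G$. That last point is exactly where the hypothesis $d(G) = n$ is used: it forces the $n$ entries of a generating $n$-tuple to remain distinct, so that the modified entry indeed lies in a generating set of size $n$.
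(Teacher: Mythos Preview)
Your proof is correct and follows essentially the same approach as the paper. The only cosmetic difference is that the paper routes the argument through the free group $F_n$ (building a commutative square $F_n \to C_2^n \to C_2^m$ and $F_n \to G \to C_2^m$, then producing a primitive $x_1^{v_1}\cdots x_n^{v_n}$ in $F_n$ whose image in $G$ is primitive), whereas you work directly in $G$ with Nielsen moves on the $n$-tuple $(g_1,\dots,g_n)$; you also make explicit the point that $d(G)=n$ forces the entries of any generating $n$-tuple to be pairwise distinct, which the paper leaves implicit when it asserts that $g_{i_1}\cdots g_{i_k}$ is primitive in $G$.
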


\begin{proof}
First if $H$ is normal and
contains an element $g$ of
a generating $n$-tuple for $G$ then the image of this $n$-tuple
gives rise to a generating $(n-1)$-tuple of $G/H$, just as in
Lemma \ref{rank}, but $d(C_2^n)=n$. 

Now suppose that $H$ does not contain a primitive element of $G$
and let $q:G \twoheadrightarrow G/H$ be the quotient homomorphism, where
we know $H$ is normal in $G$ and $G/H\cong C_{2}^{m}$ for some $m \leq n$ 
by Lemma \ref{primitive1}. 
Given a generating $n$-tuple $(g_1,\ldots ,g_n)$ for $G$, let
$F_n$ be the free group on $x_1,\ldots ,x_n$ and set $\theta : F_{n} \twoheadrightarrow G$ to be the
homomorphism  extending the map $x_i\mapsto g_i$. 
Note that if we have $k\leq n$ and integers
$1\leq i_1<i_2<\dots <i_k\leq n$ then $x_{i_1}x_{i_2}\cdots x_{i_k}$ is
primitive in $F_n$ and $\theta(x_{i_1}x_{i_2}\cdots x_{i_k})
=g_{i_1}g_{i_2}\cdots g_{i_k}$ is primitive in $G$.

Assume that $m<n$ and consider the map $q\circ \theta : F_n \twoheadrightarrow C_2^m$, which factors
through $C_2^n$ via the abelianisation map $\ab: F_{n} \twoheadrightarrow C_{2}^{n}$ and the map $\psi: C_{2}^{n} \twoheadrightarrow C_{2}^{m}$. That is, the following diagram commutes, and all maps are surjections:

\begin{displaymath}
\xymatrix{
F_{n} \ar[r]^{\ab}  \ar[d]_{\theta} & C_{2}^{n} \ar[d]^{\psi} \\
 G \ar[r]_{q} & C_{2}^{m}
}
\end{displaymath}

As $\psi$ is now a linear map from an $n$ dimensional
vector space over $\mathbb F_2$ to an $m$ dimensional space, we have a non-trivial
element $(v_1,\ldots ,v_n)$ of $C_2^n$ in the kernel of $\psi$. Now we can
assume that each $v_i$ takes the value 0 or 1, so we form the primitive
element $x=x_1^{v_1}x_2^{v_2}\cdots x_n^{v_n}$ of $F_n$
which maps to the identity under $\psi \circ \ab$, thus
$g_1^{v_1}g_2^{v_2}\cdots g_n^{v_n}=\theta(x)$ is a primitive element of $G$
which maps to the identity under $q$ and so is in $H$; a contradiction. 

Similarly if $n=m$ then, for any $(w_1,\ldots ,w_n)$ in 
$\mathbb F_2^n -\{0\}$, the coset of $H$ in $G$ corresponding to this
point contains the primitive element 
$g_1^{w_1}g_2^{w_2}\cdots g_n^{w_n}$ of $G$.
\end{proof}

Thus if $G$ is a group with $d(G)=n$ we have two possibilities: either
$C_2^n$ is not a quotient of $G$ in which case
all subgroups of $G$ having index less than $3 \cdot 2^{n-1}$ contain primitive
elements, or $G$ surjects to $C_2^n$ in which case there is a single
subgroup of index less than $3 \cdot 2^{n-1}$ which fails to contain a primitive
element. The uniqueness in the second case comes about because
a homomorphism from a rank $n$ group $G$ to an abelian group of
exponent 2 must factor through $G/[G,G]G^2$. As $G/([G,G]G^2)\cong C_2^m$
for $m\leq n$, we see that when $n=m$ any exceptional subgroup must be
equal to $[G,G]G^2$. 

Note that the inequality $[G:H] < 3 \cdot 2^{n-1}$ in Theorem \ref{classify} is somewhat necessary: here is an example of what occurs when the inequality doesn't hold.

\begin{exmp}\label{2x2n eg}
Take the (free) subgroup $H:=[F_{n},F_{n}]F_{n}^2<F_{n}$ of index $2^{n}$ with no primitive elements of $F_{n}$. Then $H$ itself has several normal subgroups of index 2, none of which contain primitive elements of $F_{n}$. In $F_{n}$, these subgroups have index $2\cdot2^{n}>3 \cdot 2^{n-1}$, so this is not a counterexample to Theorem \ref{classify}.
\end{exmp}

We remark that the number of subgroups of $F_n$ with
index less than $3 \cdot 2^{n-1}$ is vast: for instance
by \cite[Corollary 2.1.2]{LubSeg} the number
of subgroups of $F_n$
with index equal to $2^n$ is bounded below by $((2^n)!)^{n-1}$,   
yet only one of these subgroups fails to contain a primitive element by Theorem
\ref{classify}.

It would be interesting to find a closed form expression for $M(n)$, which we define to
be the smallest number $i$ such that $F_{n}$ has a subgroup other than $[F_{n},F_{n}]F_{n}^2$ of index $i$ which contains no primitive elements. By Theorem \ref{classify} and the example immediately proceeding it we have $3 \cdot 2^{n-1}\leq M(n) \leq 2\cdot2^{n}$.  In particular, consider any quotient map $f:F_{2} \twoheadrightarrow S_{3}$; the kernel $N$ of this map has index $6=3 \cdot 2^{2-1}$, and moreover $N$ contains no primitive elements of $F_{2}$ by Lemma \ref{rank}. So $M(2)=6$. Moreover, it is straightforward to see that $M(1)=3$. We do not know $M(n)$ for any other values of $n$.

Our analysis of the possible location of primitive elements, relative to 
finite index subgroups of $F_{n}$, was motivated by the following result of Parzanchevski 
and Puder in \cite{Puder}:

\begin{thm}[{\cite[Corollary 1.3]{Puder}}]
The set $P$ of primitive elements in $F_{n}$ is closed in the profinite 
topology.
\end{thm}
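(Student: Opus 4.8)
The statement to prove is Corollary 1.3 of Parzanchevski–Puder: the set $P$ of primitive elements in $F_n$ is closed in the profinite topology. Recall that a basic closed set in the profinite topology on $F_n$ is a union of cosets of finite-index (normal) subgroups, and a set is closed iff its complement is open, i.e. iff for every $w \notin P$ there is a finite-index subgroup $N \vartriangleleft F_n$ with $wN \cap P = \varnothing$. So the plan is: \emph{given a non-primitive $w \in F_n$, produce a finite-index subgroup $N$ such that no element of the coset $wN$ is primitive.} This is exactly the separability statement, and the bulk of the work is already cited in the introduction of this paper — Parzanchevski and Puder's own main theorem (the result referenced as \cite{Puder}) says precisely that for non-primitive $w$ there exists a finite-index $H < F_n$ with $wH$ containing no primitive element. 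So the proof is short: it is a deduction of the topological formulation from that coset-separation result.

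\textbf{Key steps, in order.} First I would recall the definition of the profinite topology on $F_n$: the finite-index normal subgroups form a neighbourhood basis of the identity, so the closed sets are exactly those that are ``saturated'' by arbitrarily deep congruences — concretely, $A \subseteq F_n$ is closed iff for every $w \notin A$ there is a finite-index subgroup $K$ with $wK \cap A = \varnothing$ (one can take $K$ normal without loss). Second, I would invoke the Parzanchevski–Puder coset result: if $w \in F_n$ is not primitive, then there is a finite-index subgroup $H < F_n$ with $wH \cap P = \varnothing$. Third, replace $H$ by its normal core $N := \operatorname{core}_{F_n}(H)$, which is still of finite index and is contained in $H$, so $wN \subseteq wH$ and hence $wN \cap P = \varnothing$; this $N$ is an honest basic open neighbourhood of (a translate of) the identity. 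Fourth, conclude: every $w$ in the complement of $P$ has a profinite-open neighbourhood $wN$ disjoint from $P$, so $F_n \setminus P$ is open and $P$ is closed.

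\textbf{The main obstacle.} Essentially all the genuine content lives in the cited coset-separation theorem of Parzanchevski–Puder, whose proof uses their primitivity-rank machinery and is not reproved here; relative to that input, the present deduction is formal. The only mild subtlety worth flagging is the interplay between left cosets, right cosets, and the topology: since $N$ can be chosen normal, left and right cosets coincide and $wN$ is simultaneously a neighbourhood in the (two-sided, group) profinite topology, so there is no ambiguity — but I would state this explicitly to keep the argument clean. One could alternatively phrase the whole thing via the profinite completion $\widehat{F_n}$ and note that $P$ is the preimage of nothing canonical, so the coset formulation is really the right level of generality; I would not pursue that and would keep the proof at the level of finite quotients.
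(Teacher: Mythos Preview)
The paper does not prove this statement at all: it is cited verbatim as Corollary~1.3 of \cite{Puder} with no argument given, and then the coset-separation statement (Corollary~\ref{incoset}) is deduced from it as an immediate consequence. Your proposal runs this deduction in the opposite direction — you take the coset-separation statement as a black-box input and derive closedness of $P$ from it. Relative to the paper's logical structure this is circular: the ``Parzanchevski--Puder coset result'' you invoke in your second step is precisely what the paper states as Corollary~\ref{incoset}, \emph{derived from} the theorem you are attempting to prove, and the sentence you quote from the introduction is the authors' informal summary of that same corollary.

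You are of course right that the two formulations are logically equivalent, and you correctly identify that all the genuine content lives in \cite{Puder}. But that is exactly the point: what you have written is an unpacking of the definition of the profinite topology (closed $\Leftrightarrow$ complement open $\Leftrightarrow$ every point of the complement has a coset neighbourhood disjoint from the set), not an independent proof. The normal-core step is fine but cosmetic. If one actually wanted to prove the theorem from scratch, one would need the primitivity-rank / measure-preservation machinery of \cite{Puder}, which neither the paper nor your proposal supplies.
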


\begin{cor} \label{incoset} 
Given $F_{n}$, and $w\in F_{n}$ a non-primitive element, there is a finite index subgroup $H<F_{n}$ such that the coset $wH$ does not contain any primitive elements (but of course contains $w$). Taking $w=e$ gives a finite index
subgroup with no primitive elements.
\end{cor}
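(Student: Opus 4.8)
The plan is to read this off directly from the theorem of Parzanchevski and Puder quoted immediately above, which states that the set $P$ of primitive elements of $F_n$ is closed in the profinite topology on $F_n$. Recall that a basis of open sets for the profinite topology is given by the cosets of the finite index subgroups of $F_n$ (equivalently, of the finite index normal subgroups, since a finite index subgroup is a union of cosets of its core). Hence a subset $C \subseteq F_n$ being closed is precisely the assertion that for every $g \notin C$ there is a finite index subgroup $N < F_n$ with $gN \cap C = \emptyset$.

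First I would note that, since $w$ is assumed non-primitive, $w \notin P$. Applying the closedness of $P$ at the point $w$, I extract a finite index subgroup $H < F_n$ with $wH \cap P = \emptyset$; that is, no element of the coset $wH$ is a primitive element of $F_n$. Since $e \in H$ we have $w = we \in wH$, so $H$ is a finite index subgroup whose coset $wH$ contains $w$ but no primitive element, as required.

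For the last sentence of the statement, specialise to $w = e$. The identity is indeed non-primitive: if a generating $n$-tuple of $F_n$ had an entry equal to $e$, deleting that entry would give a generating $(n-1)$-tuple, contradicting $d(F_n) = n$. So the construction above with $w = e$ produces a finite index subgroup $H = eH$ containing no primitive element, which in particular gives a non-constructive proof of the existence statement underlying Theorem \ref{classify}.

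I do not expect any obstacle: all of the mathematical content sits in the Parzanchevski–Puder theorem being cited, and the corollary is simply the unwinding of the definitions of ``profinite topology'' and ``closed set''. The only point worth a remark is the mild mismatch between ``closed'' being naturally phrased via cosets of \emph{normal} finite index subgroups and the statement asking merely for ``a finite index subgroup''; this is harmless, since the argument in any case delivers a coset $wN$ of some finite index (normal) $N$, and a normal finite index subgroup is in particular a finite index subgroup.
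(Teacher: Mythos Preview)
Your proposal is correct and matches the paper's approach: the corollary is stated immediately after the Parzanchevski--Puder theorem with no separate proof, so it is intended to be read off directly from the closedness of $P$ in the profinite topology, exactly as you do. Your remarks on why $e$ is non-primitive and on normal versus arbitrary finite index subgroups are accurate and fill in details the paper leaves implicit.
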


Given that the above result is existential, we decided to apply our techniques to look for explicit examples of subgroups with no primitive elements; Lemma \ref{primitive1} and Theorem \ref{classify} came about from this analysis, somewhat serendipitously. We have shown that $H=[F_{n},F_{n}]F_{n}^2$ is the (unique) finite index subgroup of $F_{n}$ of smallest index to contain no primitive elements, and that the next such example must have index $i \geq 3 \cdot 2^{n-1}$ (with equality in the case $n=2$).

We finish by remarking that a recent result of Clifford and Goldstein
in \cite{CandG} proves
there is an algorithm to determine whether or not a 
finitely generated subgroup of $F_n$ contains 
a primitive element, although they say that they do not expect it to
be implemented in practice. One of our overall aims is to give a characterisation of such subgroups that leads to computationally-efficient recognition.

\vspace{5pt}

\noindent \scriptsize{\textsc{Selwyn College, Cambridge
\\Grange Road, Cambridge, CB3 9DQ, UK
\\J.O.Button@dpmms.cam.ac.uk
\\
\\Mathematics Department, University of Neuch\^{a}tel
\\Rue Emile-Argand 11, Neuch\^{a}tel, CH-2000, SWITZERLAND
\\maurice.chiodo@unine.ch
\\
\\31 Mariner's Way, Cambridge, CB4 1BN,  UK
\\marianozeron@gmail.com}


\begin{thebibliography}{99}

\bibitem{axiom} A. Blass, \emph{Injectivity, Projectivity, and the Axiom of Choice}, Trans. Amer. Math. Soc. \textbf{255}, 31--59, (1979).

\bibitem{BCZ2} J. Button, M. Chiodo, M. Zeron-Medina Laris, \emph{Coset intersection graphs, and transversals as generating sets for finitely generated groups}, J. Gonz\'{a}lez-Meneses et al.~(eds.), Extended Abstracts Fall 2012, Trends in Mathematics \textbf{5}, 29--34, Birkh\"{a}user, (2014).

\bibitem{BCZ} J. Button, M. Chiodo, M. Zeron-Medina Laris, \emph{Coset intersection graphs for groups}, Amer.~Math.~Monthly \textbf{121}, No.~10, 922--926, (2014).

\bibitem{Camtrans} P. Cameron, \emph{Generating a group by a transversal} \emph{http://www.maths.qmul.ac.uk/$\sim$pjc/papers.html}

\bibitem{Cameron} P. Cameron, \emph{http://www.maths.qmul.ac.uk/$\sim$pjc/oldprob.html}

\bibitem{CandG} A. Clifford and R. Goldstein,
\emph{Subgroups of free groups and primitive elements},
J. Group Theory \textbf{13}, 601--611, (2010).

\bibitem{Todd Coxeter} H. S. M. Coxeter, J. A. Todd, \emph{A practical method for enumerating cosets of a
finite abstract group}, Proceedings of the Edinburgh Mathematical Society, Series II, \textbf{5}, 26--34 (1936).


\bibitem{Dunw} M. Dunwoody, \emph{Nielsen transformations},
1970 Computational Problems in Abstract Algebra (Proc. Conf.,
Oxford, 1967), 45--46, Pergamon, Oxford.

\bibitem{Ev} M. J. Evans, \emph{Primitive elements in free groups},
Proc. Amer. Math. Soc. \textbf{106}, 313--316, (1989).

\bibitem{KapMia} I. Kapovich, A. Myasnikov,  \emph{Stallings foldings and subgroups of free groups}, J. Algebra. \textbf{248}, 608--668, (2002).

\bibitem{LubSeg} A. Lubotzky and D. Segal, \emph{Subgroup Growth},
Progress in Mathematics \textbf{212}, Birkha\"user Verlag, Basel, (2003).


\bibitem{Nos} G. A. Noskov, \emph{Primitive elements in a free group},
Matematicheskie Zametki \textbf{30}, 497--500, (1981).


\bibitem{Pak} I. Pak,
\emph{What do we know about the product replacement algorithm?},
Groups and computation, III, 301--347,
Ohio State Univ. Math. Res. Inst. Publ. \textbf{8}, de Gruyter, Berlin, (2001).

\bibitem{Puder} O. Parzanchevski, D. Puder, \emph{Measure preserving words are primitive}, arXiv:1202.3269v1 (2012).


\bibitem{Whi} J. Whiston, \emph{Maximal independent generating sets of
the symmetric group}, J. Algebra \textbf{232}, 255--268, (2000).


\end{thebibliography}
\end{document}